\documentclass{article}
\usepackage{arxiv}

\usepackage[utf8]{inputenc} 
\usepackage[T1]{fontenc}    
\usepackage{hyperref}       
\usepackage{url}            
\usepackage{booktabs}       
\usepackage{amsfonts}       
\usepackage{nicefrac}       
\usepackage{microtype}      
\usepackage{lipsum}
\usepackage{mathtools,physics}
\usepackage{enumerate}
\usepackage{graphicx}
\graphicspath{ {./images/} }
\usepackage{amsmath}
\usepackage{amssymb}
\usepackage{amsthm}
\usepackage{stackengine} 
\stackMath
\usepackage[framed,autolinebreaks,useliterate]{mcode} 
\usepackage{centernot}

\numberwithin{equation}{section}

\newtheorem{theorem}{Theorem}[section]
\newtheorem{lemma}[theorem]{Lemma}
\newtheorem{corollary}[theorem]{Corollary}
\newtheorem{remark}[theorem]{Remark}
\newtheorem{example}[theorem]{Example}

\newcommand{\rng}[1]{\mathcal{C}\left(#1\right)}
\newcommand{\nullsp}[1]{\mathcal{N}\left(#1\right)}
\newcommand{\pinv}[1]{{#1}^+\!}

\newcommand{\proj}[2]{{#1}\pinv{\brac*{{#2}^*{#1}}}{#2}^*}

\DeclarePairedDelimiter\brac{(}{)}

\newcommand{\bK}{\mathbb{K}}
\newcommand{\bC}{\mathbb{C}}
\newcommand{\bR}{\mathbb{R}}

\makeatletter
\let\save@mathaccent\mathaccent
\newcommand*\if@single[3]{%
  \setbox0\hbox{${\mathaccent"0362{#1}}^H$}%
  \setbox2\hbox{${\mathaccent"0362{\kern0pt#1}}^H$}%
  \ifdim\ht0=\ht2 #3\else #2\fi
  }
\newcommand*\rel@kern[1]{\kern#1\dimexpr\macc@kerna}
\newcommand*\widebar[1]{\@ifnextchar^{{\wide@bar{#1}{0}}}{\wide@bar{#1}{1}}}
\newcommand*\wide@bar[2]{\if@single{#1}{\wide@bar@{#1}{#2}{1}}{\wide@bar@{#1}{#2}{2}}}
\newcommand*\wide@bar@[3]{%
  \begingroup
  \def\mathaccent##1##2{%
    \let\mathaccent\save@mathaccent
    \if#32 \let\macc@nucleus\first@char \fi
    \setbox\z@\hbox{$\macc@style{\macc@nucleus}_{}$}%
    \setbox\tw@\hbox{$\macc@style{\macc@nucleus}{}_{}$}%
    \dimen@\wd\tw@
    \advance\dimen@-\wd\z@
    \divide\dimen@ 3
    \@tempdima\wd\tw@
    \advance\@tempdima-\scriptspace
    \divide\@tempdima 10
    \advance\dimen@-\@tempdima
    \ifdim\dimen@>\z@ \dimen@0pt\fi
    \rel@kern{0.6}\kern-\dimen@
    \if#31
      \overline{\rel@kern{-0.6}\kern\dimen@\macc@nucleus\rel@kern{0.4}\kern\dimen@}%
      \advance\dimen@0.4\dimexpr\macc@kerna
      \let\final@kern#2%
      \ifdim\dimen@<\z@ \let\final@kern1\fi
      \if\final@kern1 \kern-\dimen@\fi
    \else
      \overline{\rel@kern{-0.6}\kern\dimen@#1}%
    \fi
  }%
  \macc@depth\@ne
  \let\math@bgroup\@empty \let\math@egroup\macc@set@skewchar
  \mathsurround\z@ \frozen@everymath{\mathgroup\macc@group\relax}%
  \macc@set@skewchar\relax
  \let\mathaccentV\macc@nested@a
  \if#31
    \macc@nested@a\relax111{#1}%
  \else
    \def\gobble@till@marker##1\endmarker{}%
    \futurelet\first@char\gobble@till@marker#1\endmarker
    \ifcat\noexpand\first@char A\else
      \def\first@char{}%
    \fi
    \macc@nested@a\relax111{\first@char}%
  \fi
  \endgroup
}
\makeatother
\title{Generalized Wedderburn Rank Reduction}

\author{
  Oskar Kędzierski\\
  NASK National Research Institute\\
  Warsaw, Poland\\
  \texttt{oskar.kedzierski@nask.pl}\\
}

\begin{document}
\maketitle

\begin{abstract}
We generalize the Wedderburn rank reduction formula by replacing the inverse with the Moore--Penrose pseudoinverse. In particular, this allows one to remove the non--singularity of a certain matrix from assumptions. The results implies in a straightforward way Nystroem, CUR decompositions, meta-factorization, and a result of Ameli, Shadden~\cite{AmeliShadden}. We investigate which properties of the matrix are inherited by the generalized Wedderburn reduction. Reductions leading to the best low-rank approximation are explicitly described in terms of singular vectors. We give a self--contained calculation of the range and the nullspace of the projection $A(BA)^+B$ and prove that any projection can be expressed in this way.
\end{abstract}

\section{Introduction}\label{sec:intro}
The classical Wedderburn rank reduction formula, cf.~\cite[p.~69]{Wedderburn},\cite{ChuFunderlicGolub}, gives an explicit way to reduce the rank of a given matrix $A$, i.e.,
\begin{lemma}[Wedderburn rank reduction formula]\label{lem:classic_wedd}
Let $A\in\bK^{m\times n}$ be any matrix. Let $x\in\bK^n,\ y\in\bK^m$.
Assume that
\[\omega=y^* Ax\neq 0.\]
Then
\[\rank(A-\omega^{-1}Axy^*A)=\rank A -1.\]
In general, if $X\in\bK^{n\times k},\ Y\in\bK^{m\times k}$ and $M\in\bK^{k\times k}$ is given by
\[M=Y^* AX,\]
is invertible then 
\[\rank(A-AXM^{-1}Y^*A)=\rank A -k.\]
\end{lemma}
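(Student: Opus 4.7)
The rank-1 formula is the special case $k=1$ (with $X=x$, $Y=y$, $M=\omega$), so the plan is to prove only the general statement. Set $N := A - AXM^{-1}Y^*A$ and observe the two annihilation identities
\[
NX = AX - AXM^{-1}(Y^*AX) = 0,
\qquad
Y^*N = Y^*A - (Y^*AX)M^{-1}Y^*A = 0,
\]
which hold precisely because $Y^*AX = M$. These will carry essentially all of the argument.

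The heart of the proof is to establish the direct-sum decomposition
\[
\rng{A} \;=\; \rng{AX} \oplus \rng{N}.
\]
For the sum, I would use the tautology $A = AX\bigl(M^{-1}Y^*A\bigr) + N$, which expresses every column of $A$ as a column of $AX$ plus a column of $N$; the reverse inclusions $\rng{AX}\subseteq\rng{A}$ and $\rng{N}\subseteq\rng{A}$ are immediate, the latter from $N = A(I - XM^{-1}Y^*A)$. For the directness, suppose $AXc = Nw$; applying $Y^*$ on the left gives $Mc = Y^*AXc = Y^*Nw = 0$, and invertibility of $M$ forces $c=0$, so the intersection is trivial.

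Taking ranks in the decomposition yields $\rank A = \rank(AX) + \rank N$. Since $M = Y^*AX$ is invertible of size $k$, the $m\times k$ matrix $AX$ has rank at least $k$ and therefore exactly $k$; rearranging gives $\rank N = \rank A - k$, as claimed.

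I do not foresee a genuine obstacle: the argument is mechanical once the two annihilation identities are spotted. The single place where invertibility of $M$ is actually used is the directness step of the decomposition, which clarifies why this hypothesis cannot be dropped in the classical formula and motivates the pseudoinverse generalization pursued in the rest of the paper.
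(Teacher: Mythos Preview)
Your argument is correct. The decomposition $\rng{A}=\rng{AX}\oplus\rng{N}$ is sound, and the rank identity follows at once. One small expository inaccuracy: invertibility of $M$ is used not just in the directness step but also in concluding $\rank(AX)=k$; you do invoke it there, so this is only a matter of phrasing.

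The paper's own proof works on the dual side. It shows $\nullsp{N}=\nullsp{A}\oplus\rng{X}$: the inclusion $\nullsp{A}+\rng{X}\subset\nullsp{N}$ comes from $NX=0$ and $\nullsp{A}\subset\nullsp{N}$; directness from $\rank AX=k$; and the reverse inclusion by writing any $u\in\nullsp{N}$ as $(u-X\Lambda)+X\Lambda$ with $\Lambda=M^{-1}Y^*Au$ and checking $A(u-X\Lambda)=0$. The rank formula then follows from the rank--nullity theorem. Your column-space decomposition is a touch cleaner here, since the dimension count drops out in one line rather than via separate upper and lower bounds. The paper's nullspace approach, however, is the one that carries over verbatim to the pseudoinverse generalization in Theorem~\ref{thm:gen_wedd}, where the key computation is again identifying $\nullsp{B}$ as $\nullsp{A}$ enlarged by an explicit $k$-dimensional complement.
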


For the sake of completeness, we include a brief proof of both statements, although they follow from the more general result, cf.~Theorem~\ref{thm:gen_wedd}.

\begin{proof}
It follows from that assumptions that $Ax\neq 0$ and 
\[(A-\omega^{-1}Axy^*A)x=0.\]
Therefore 
\[\rank(A-\omega^{-1}Axy^*A)\le \rank A-1.\]
Assume now $(A-\omega^{-1}Axy^*A)u=0$. Let $\lambda=\omega^{-1}y^*Au$, then
\[A(u-\lambda x)=0,\]
hence $u\in \nullsp{A}+\bK x$. Therefore
\[\rank(A-\omega^{-1}Axy^*A)= \rank A-1.\] 
In the general case $\rank AX=k$ (otherwise $\rank M<k$) and the proof is similar.
That is 
\[\brac*{A-AXM^{-1}Y^*A}X=0,\]
and the $k$ columns of $X$ are linearly independent. On the other hand $AX\in\bK^{m\times k}$ is of rank $k$, so no linear combination of columns of $X$ is contained in the nullspace of $A$. Since $\nullsp{A}\subset\nullsp{A-AXM^{-1}Y^*A}$ 
\[\rank\brac*{A-AXM^{-1}Y^*A}=n-\dim\nullsp{A-AXM^{-1}Y^*A}\le n-\brac*{(n-\rank{A})+k}=\rank A - k.\]
Let $U\in\bK^{n\times k}$ be any matrix such that 
\[\brac*{A-AXM^{-1}Y^*A}U=0.\] 
Let $\Lambda=M^{-1}Y^*AU$ then
\[A(U-X\Lambda)=0,\]
hence
\[U\in\nullsp{A}+X\Lambda.\]
This implies that
\[\nullsp{A-AXM^{-1}Y^*A}\subset \nullsp{A}+\rng{X}\cong \nullsp{A}\oplus\rng{X},\]
and finally
\[\rank\brac*{A-AXM^{-1}Y^*A}=\rank A-k.\]
\end{proof}

The main result of the following paper is a generalization of the Wedderburn rank reduction by replacing the inverse with the Moore--Penrose pseudoinverse and dropping the non--singularity assumption; see Theorem~\ref{thm:gen_wedd}.
Results related to our generalization can be found in~\cite{MahdaviRaboky}. The result is used to prove the Wedderburn decomposition (Lemma~\ref{lem:wedd_decomp}), related to the generalized Nystr\"om method of Nakatsukasa~\cite{Nakatsukasa}. It generalizes the CUR decomposition (cf. \cite[Theorem~5.5~(ii),(iv)]{HammCU}), when matrices $X,Y$ have columns given by unit vectors corresponding to the chosen rows and columns. Another consequence is the so-called meta-factorization by Karpowicz\cite[Theorem~4]{karpowicz2022theory}. One of the main tools is the projection formula (see Lemma~\ref{lem:proj}), proven here from the first principles, without the Zlobec formula, cf.~\cite{Cerny}. Finally, matrices $X,Y$ which lead to the best $k$-rank generalized Wedderburn rank reduction are given in terms of SVD decomposition of matrix $A$, see Lemma~\ref{lem:SVD}. The properties shared by a matrix and its reduction are discussed in Remark~\ref{rem:inherited}, with concrete counterexamples. Theorem~\ref{thm:meet_and_join} contains a description of the meet and the join of commuting projections in the form as in Lemma~\ref{lem:proj}. The correctness of the results is illustrated by the MATLAB code.

\section{Notation}\label{sec:notation}
Let $[n]=\{1,\ldots,n\}$ and $e_i=(0,\ldots,0,1,0,\ldots,0)^\intercal\in\bK^n$ for $i=1,\ldots,n$. The set of all matrices with $m$ rows, $n$ columns, and coefficients in field $\bK$ is denoted $\bK^{m\times n}$, where $\bK=\bR$ or $\bK=\bC$.  The column space of matrix $A$ is denoted by $\rng{A}$ and the null space by $\nullsp{A}$.  The Moore--Penrose pseudoinverse of matrix $A$ is denoted by $\pinv{A}$. The complex conjugate is denoted by $A^*$. Any matrix $A$ induces the direct sum decomposition $\bK^m=\rng{A}\oplus\nullsp{A^*}$. Matrix $A$ is Hermitian (or symmetric over $\bR$) if $A^*=A$. A square matrix is unitary (or orthogonal over $\bR$) if $A^*A=I$ where $I$ denotes the unit matrix. SVD stands for singular value decomposition, i.e., decomposition $A=U\Sigma V^*$, where $U,V$ are unitary matrices, and $\Sigma$ is a real non--negative generalized diagonal matrix with decreasing diagonal entries. If $\rank A=r$ then the first $r$ columns of matrix $U$ are called left singular vectors of $A$ and likewise the first $r$ columns of matrix $V$ are called right singular vectors of $A$. 

If $W=V\oplus U$ is a direct sum decomposition of a vector space $W$ then the linear function $\varphi\colon \bK^n\rightarrow \bK^n$ given by condition $\varphi(v+u)=u$, where $v\in V,u\in U$, is called a projection onto subspace $V$ along $U$. It is a well--known fact that an endomorphis $\varphi$ is a projection onto its image along its kerenel if and only if the matrix $P$ of $\varphi$ (relative to the same fixed basis in the domain and in the codomain) is idepotent, i.e., $P^2=P$. The space $\bK^n$ is always equipped with the standard inner product. Endomorphism $\varphi$ is an orthogonal projection if and only if its matrix relative to an orthonormal basis is idempotent and Hermitian, i.e., $P^*=P=P^2$.

\section{Preliminaries}\label{sec:prelim}
The following facts about Moore--Penrose pseudoinverses will be used in the paper tacitly.

\begin{gather}
    A\pinv{A}A=A, \label{eq:Penrose_cond1} \\
    \pinv{A}A\pinv{A}=\pinv{A}, \label{eq:Penrose_cond2} \\
    (A\pinv{A})^*=A\pinv{A}, \label{eq:Penrose_cond3} \\
    (X\pinv{A})^*=X\pinv{A}. \label{eq:Penrose_cond4}
\end{gather}

In particular, $A\pinv{A}$ and $\pinv{A}A$ are orthogonal projections and

\[\rng{A\pinv{A}}=\rng{A},\quad  \nullsp{A\pinv{A}}=\nullsp{A^*},\]
\[\rng{\pinv{A}A}=\rng{A^*},\quad  \nullsp{\pinv{A}A}=\nullsp{A}.\]

The following fact was proven by Greville, and it will be referred to as Greville's condition.
\begin{equation}\label{eq:Grevilles_cond}
    \pinv{(AB)}=\pinv{B}\pinv{A}\Longleftrightarrow \rng{BB^*A^*}\subset \rng{A^*}\quad\text{and}\quad\rng{A^*AB}\subset \rng{B}.
\end{equation}

\begin{lemma}\label{lem:cancel_in_rowsp_and_colsp}
Let $A\in\bK^{m\times n},\ B\in\bK^{n\times k}$ be two matrices. Then
\[\rng{AB}=\rng{ABB^*},\quad \nullsp{AB}=\nullsp{A^*AB}.\]
\end{lemma}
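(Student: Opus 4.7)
The plan is to prove the nullspace equality first, since it admits a direct positive-definiteness argument, and then derive the range equality from it by taking orthogonal complements.

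For the nullspace equality, the inclusion $\nullsp{AB}\subseteq\nullsp{A^*AB}$ is immediate since applying $A^*$ to $ABx=0$ yields $A^*ABx=0$. For the reverse inclusion, I would take $x$ with $A^*ABx=0$ and multiply on the left by $x^*B^*$ to obtain
\[\|ABx\|^2 = (ABx)^*(ABx) = x^*B^*A^*ABx = 0,\]
which forces $ABx=0$ because the standard inner product on $\bK^m$ is positive definite. This argument works over $\bK=\bR$ and $\bK=\bC$ alike.

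For the range equality, I would invoke the orthogonal direct sum decomposition $\bK^m=\rng{M}\oplus\nullsp{M^*}$ stated in Section~\ref{sec:notation}, which gives $\rng{M}=\nullsp{M^*}^\perp$. Applying this to $M=AB$ and to $M=ABB^*$ reduces the desired equality $\rng{AB}=\rng{ABB^*}$ to
\[\nullsp{B^*A^*} = \nullsp{BB^*A^*}.\]
But this is precisely the nullspace identity just proved, now with $A$ and $B$ replaced by $B^*$ and $A^*$ respectively: under that substitution $A^*AB$ becomes $(B^*)^*(B^*)(A^*)=BB^*A^*$, matching the right-hand side.

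No serious obstacle is expected; the only point to watch is that the adjoint substitution in the last step really does instantiate the previously proven identity, which it does because the nullspace statement is symmetric in form once one allows $A$ and $B$ to be arbitrary matrices of compatible sizes.
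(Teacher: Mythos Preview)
Your proof is correct. The paper's argument is even terser and treats the two claims independently: for the range equality it simply cites the standard fact $\rng{BB^*}=\rng{B}$, whence $\rng{ABB^*}=A\cdot\rng{BB^*}=A\cdot\rng{B}=\rng{AB}$; for the nullspace equality it cites $\rng{A}\cap\nullsp{A^*}=0$, so that $A^*ABx=0$ forces $ABx\in\rng{A}\cap\nullsp{A^*}=\{0\}$. Your norm computation for the nullspace is just an unpacking of that last step, and your derivation of the range equality by dualizing the nullspace identity is a mildly different organization---you prove one claim and deduce the other, whereas the paper handles them separately. Both routes are equally elementary and neither offers any real advantage over the other.
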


\begin{proof}
    The first equation is true because $\rng{BB^*}=\rng{B}$ and the second because $\rng{A}\cap\nullsp{A^*}=0$.
\end{proof}

\begin{lemma}\label{lem:null_equal_ranks}
    If $\rank A=\rank B=\rank AB$ then
    \[\nullsp{AB}=\nullsp{B}.\]
\end{lemma}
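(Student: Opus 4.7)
The plan is to prove this by a direct dimension count using rank--nullity, noting that only the equality $\rank B = \rank AB$ is actually needed (the hypothesis $\rank A = \rank B$ is presumably retained for symmetry or for how the lemma will be invoked later).

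First I would observe the trivial inclusion $\nullsp{B}\subset\nullsp{AB}$: if $Bx=0$ then $ABx=A\cdot 0=0$. The entire content is therefore to upgrade this inclusion to an equality, and since $\nullsp{B}$ and $\nullsp{AB}$ are both subspaces of $\bK^{k}$ (where $k$ is the common number of columns of $B$ and of $AB$), it suffices to show they have the same dimension.

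For that I would apply the rank--nullity theorem to $B$ and to $AB$ separately. Writing $B\in\bK^{n\times k}$ and $AB\in\bK^{m\times k}$, we get $\dim\nullsp{B}=k-\rank B$ and $\dim\nullsp{AB}=k-\rank(AB)$. The hypothesis $\rank B=\rank AB$ then immediately gives $\dim\nullsp{B}=\dim\nullsp{AB}$, and a strict inclusion between subspaces of the same finite dimension is impossible, so the inclusion from the first step is in fact an equality.

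There is no real obstacle here; the only subtlety is to notice that the claim is about the null spaces being literally equal as subspaces of $\bK^{k}$ (not merely isomorphic), which is why the trivial inclusion $\nullsp{B}\subset\nullsp{AB}$ is needed as a starting point rather than just a dimension count. The assumption $\rank A=\rank B$ plays no role in this short argument and can be dropped; it is presumably stated because in the intended application both equalities hold simultaneously.
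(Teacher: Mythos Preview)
Your argument is correct and is essentially the same as the paper's: a trivial inclusion combined with a dimension count forced by $\rank B=\rank AB$. The paper phrases it dually on the row-space side, observing $\rng{B^*A^*}\subset\rng{B^*}$ with equal dimensions, which upon taking orthogonal complements is exactly your inclusion $\nullsp{B}\subset\nullsp{AB}$; your remark that the extra hypothesis $\rank A=\rank B$ is not used is also accurate.
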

\begin{proof}
    Clearly 
    \[\rng{B^*A^*}\subset  \rng{B^*},\]
    but the dimensions are equal.
\end{proof}
\begin{corollary}\label{cor:Greville_cond}
    If $\rank A=\rank B=\rank AB$ then
    \[\pinv{(AB)}=\pinv{B}\pinv{A}\Longleftrightarrow \nullsp{B^*}=\nullsp{A}.\]
\end{corollary}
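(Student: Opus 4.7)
The plan is to invoke Greville's condition \eqref{eq:Grevilles_cond} and show that, under the rank hypothesis $\rank A=\rank B=\rank AB$, the two inclusions $\rng{BB^*A^*}\subset\rng{A^*}$ and $\rng{A^*AB}\subset\rng{B}$ are together equivalent to $\nullsp{B^*}=\nullsp{A}$. Because the hypothesis already gives that both $\rng{A^*}$ and $\rng{B}$ have the same dimension (namely $\rank A=\rank B$), the proof boils down to comparing the relevant subspaces via dimension counts and the standard orthogonal decomposition $\bK^n=\rng{A^*}\oplus\nullsp{A}=\rng{B}\oplus\nullsp{B^*}$.

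For the implication $(\Leftarrow)$, I would start from $\nullsp{B^*}=\nullsp{A}$ and take orthogonal complements in $\bK^n$ to obtain $\rng{B}=\rng{A^*}$. Then $\rng{A^*AB}\subset\rng{A^*A}=\rng{A^*}=\rng{B}$ and $\rng{BB^*A^*}\subset\rng{BB^*}=\rng{B}=\rng{A^*}$, so Greville's condition is satisfied and $\pinv{(AB)}=\pinv{B}\pinv{A}$.

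For the implication $(\Rightarrow)$, I would use Greville's condition to get in particular $\rng{A^*AB}\subset\rng{B}$. The key auxiliary fact is that $\nullsp{A^*AB}=\nullsp{AB}$ (since $\rng{AB}\cap\nullsp{A^*}=0$, which is exactly Lemma~\ref{lem:cancel_in_rowsp_and_colsp} in the second form), so $\rank(A^*AB)=\rank(AB)=\rank B$ by the hypothesis. Hence $\rng{A^*AB}$ is a subspace of $\rng{B}$ of full dimension $\rank B$, so $\rng{A^*AB}=\rng{B}$. But $\rng{A^*AB}\subset\rng{A^*}$ as well, and $\dim\rng{B}=\rank B=\rank A=\dim\rng{A^*}$ by hypothesis, forcing $\rng{B}=\rng{A^*}$. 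Taking orthogonal complements gives $\nullsp{B^*}=\nullsp{A}$.

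The only place where some care is needed is the identity $\rank(A^*AB)=\rank(AB)$ and turning Greville's inclusion into an equality of subspaces — everything else is dimension counting and passing to orthogonal complements, so I do not expect a real obstacle.
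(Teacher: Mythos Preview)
Your argument is correct and follows essentially the same route as the paper: both reduce the equivalence to Greville's condition~\eqref{eq:Grevilles_cond} and simplify the two inclusions using the rank hypothesis together with Lemma~\ref{lem:cancel_in_rowsp_and_colsp}/Lemma~\ref{lem:null_equal_ranks}. The only cosmetic difference is that the paper dualizes to the nullspace form and lets each of the two Greville inclusions produce one of the containments $\nullsp{B^*}\subset\nullsp{A}$ and $\nullsp{A}\subset\nullsp{B^*}$, whereas you extract the full equality $\rng{B}=\rng{A^*}$ from a single Greville inclusion via a dimension count; both executions are equally valid.
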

\begin{proof}
    The condition $\nullsp{B^*}\subset\nullsp{B^*A^*A}$ reduces to $\nullsp{B^*}\subset\nullsp{A}$ and the condition
$\nullsp{A}\subset\nullsp{ABB^*}$ reduces to $\nullsp{A}\subset\nullsp{B^*}$. Those are dual to the Greville's conditions~(\ref{eq:Grevilles_cond}).
\end{proof}

\section{Projection Formula}\label{sec:proj}
The classical result is well--known in the following form.

\begin{lemma}\label{lem:proj}
Let $A\in\bK^{m\times p},\ B\in\bK^{m\times q}$ be matrices such that 
\[\rng{A}\oplus\nullsp{B^*}=\bK^m.\]
Then the matrix of the projection onto $\rng{A}$ along $\nullsp{B^*}$ is given by
\[P=A\pinv{(B^*A)}B^*.\]
\end{lemma}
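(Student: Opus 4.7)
The strategy is to verify the three defining properties of the claimed projection: idempotency, the correct range, and the correct null space. Given the direct sum hypothesis, it is enough to show that $P$ restricts to the identity on $\rng{A}$ and vanishes on $\nullsp{B^*}$, since these two subspaces together span $\bK^m$.

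The crucial preliminary observation is that the hypothesis forces $\nullsp{B^*A}=\nullsp{A}$. Indeed, if $B^*Ax=0$ then $Ax\in\rng{A}\cap\nullsp{B^*}=\{0\}$, so $Ax=0$; the reverse inclusion is immediate. I expect this identification to be the pivotal step, because it is exactly what lets us replace the abstract pseudoinverse of the composite $B^*A$ by geometric information about $A$ alone. From it one gets $\rng{(B^*A)^*}=\nullsp{B^*A}^\perp=\nullsp{A}^\perp=\rng{A^*}$, so the orthogonal projection $\pinv{(B^*A)}(B^*A)$ coincides with the orthogonal projection onto $\rng{A^*}$. Post-multiplying by $A$ kills the orthogonal complement $\nullsp{A}$, yielding
\[A\pinv{(B^*A)}(B^*A)=A.\]

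With this identity in hand, the rest is mechanical. Idempotency is an easy telescoping:
\[P^2=A\pinv{(B^*A)}\bracs*{B^*A\pinv{(B^*A)}}B^*A\pinv{(B^*A)}B^*\]
collapses by \eqref{eq:Penrose_cond2} to $A\pinv{(B^*A)}B^*=P$. The range computation uses $PA=A$ from the identity above, giving $\rng{A}\subseteq\rng{P}$, while $\rng{P}\subseteq\rng{A}$ is visible from the leading factor; so $\rng{P}=\rng{A}$. For the null space, any $x\in\nullsp{B^*}$ clearly satisfies $Px=0$, so $\nullsp{B^*}\subseteq\nullsp{P}$.

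To finish, given arbitrary $x\in\bK^m$, the direct sum hypothesis yields a unique decomposition $x=a+n$ with $a\in\rng{A}$ and $n\in\nullsp{B^*}$. Since $P$ fixes $a$ and annihilates $n$, we get $Px=a$, i.e., $P$ agrees pointwise with the projection onto $\rng{A}$ along $\nullsp{B^*}$. The main obstacle, as noted, is the step $\nullsp{B^*A}=\nullsp{A}$; once that bridge between the composite and its factors is built, the standard Moore--Penrose identities \eqref{eq:Penrose_cond1}--\eqref{eq:Penrose_cond4} do the rest without any appeal to Greville's condition or to the Zlobec formula.
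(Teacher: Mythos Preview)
Your proof is correct and follows essentially the same approach as the paper: both verify idempotency directly and then pin down the range and null space via the key observation that the hypothesis forces $\nullsp{B^*A}=\nullsp{A}$ (which the paper phrases as the rank equality $\rank A=\rank B=\rank B^*A$). The only cosmetic difference is that you derive the identity $PA=A$ explicitly to obtain $\rng{A}\subseteq\rng{P}$ and then finish via the direct-sum decomposition, whereas the paper reaches $\rng{P}=\rng{A}$ and $\nullsp{P}=\nullsp{B^*}$ by dimension counting together with Lemma~\ref{lem:null_equal_ranks}.
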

\begin{proof}
    Since
    \[P^2=A\brac*{\pinv{(B^*A)}B^*A\pinv{(B^*A)}}B^*=A\pinv{(B^*A)}B^*=P,\]
    $P$ is a projection. The assumption implies that $\rank A=\rank B=\rank B^*A$ and therefore
    $\rng{P}=\rng{A}$ as $\rng{P}\subset\rng{A}$ and they have the same dimensions. Similarly $\nullsp{P}=\nullsp{B^*}$, by Lemma~\ref{lem:null_equal_ranks}.
\end{proof}

It turns out that the assumptions on the matrices $A,B$ may be dropped and the range of $P$ will become smaller while the null space will become larger. The same result was obtained in~\cite{Cerny} by \v{C}ern{\'y} but here we do not use the Zlobec formula. 

\begin{lemma}\label{lem:proj_dim}
Let $A\in\bK^{m\times p},\ B\in\bK^{m\times q}$ be any matrices.
Let 
\[P=A\pinv{(B^*A)}B^*.\]
Then $P$ is a projection of $\rank P=\rank(B^*A)$ and
\[\rng{P}=\rng{AA^*B},\quad\nullsp{P}=\nullsp{A^*BB^*}.\]
\end{lemma}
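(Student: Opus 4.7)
I will first verify the easy assertions, then identify the range and nullspace as pairs of inclusions, working throughout from the Penrose conditions in Section~\ref{sec:prelim}. Idempotency is immediate from~(\ref{eq:Penrose_cond2}):
\[P^2 = A\pinv{(B^*A)}(B^*A)\pinv{(B^*A)}B^* = A\pinv{(B^*A)}B^* = P.\]
For the rank, the factored form $P = A\pinv{(B^*A)}B^*$ gives $\rank P \le \rank\pinv{(B^*A)} = \rank(B^*A)$, while the sandwich $B^*PA = (B^*A)\pinv{(B^*A)}(B^*A) = B^*A$, from~(\ref{eq:Penrose_cond1}), gives $\rank(B^*A) \le \rank P$; hence equality.

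\textbf{Range.} For $\rng{P}\subseteq \rng{AA^*B}$, every vector $Pv = A\cdot\pinv{(B^*A)}B^*v$ factors through $A\cdot\rng{\pinv{(B^*A)}} = A\cdot\rng{(B^*A)^*} = A\cdot\rng{A^*B} = \rng{AA^*B}$, using the standard fact $\rng{\pinv{M}} = \rng{M^*}$. For the reverse, I compute
\[P\cdot AA^*B = A\bracs*{\pinv{(B^*A)}(B^*A)}A^*B = A\cdot A^*B = AA^*B,\]
using that $\pinv{(B^*A)}(B^*A)$ is the orthogonal projection onto $\rng{(B^*A)^*} = \rng{A^*B}$ and therefore fixes its own range; hence each column of $AA^*B$ is a fixed point of $P$ and lies in $\rng{P}$.

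\textbf{Nullspace.} Dually, if $A^*BB^*v = 0$ then $B^*v \in \nullsp{A^*B} = \nullsp{\pinv{(B^*A)}}$, so $Pv = A\cdot\pinv{(B^*A)}B^*v = 0$, giving $\nullsp{A^*BB^*}\subseteq \nullsp{P}$. Conversely, if $Pv = 0$, premultiplication by $B^*$ yields $(B^*A)\pinv{(B^*A)}B^*v = 0$; since $(B^*A)\pinv{(B^*A)}$ is the orthogonal projection onto $\rng{B^*A}$, its kernel is $\rng{B^*A}^\perp = \nullsp{A^*B}$, forcing $A^*BB^*v = 0$. There is no real obstacle: the only ingredients beyond the displayed Penrose conditions are the auxiliary identities $\rng{\pinv{M}} = \rng{M^*}$ and $\nullsp{\pinv{M}} = \nullsp{M^*}$, which follow in a line from $\pinv{M} = \pinv{M}M\pinv{M}$ together with the preliminaries.
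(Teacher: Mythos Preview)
Your proof is correct. The overall strategy---exploit the Penrose conditions and the fact that $M\pinv{M}$ and $\pinv{M}M$ are orthogonal projections onto $\rng{M}$ and $\rng{M^*}$---matches the paper's, but you streamline several steps. For the rank, the paper uses the trace identity $\rank P=\Tr P=\Tr\brac*{B^*A\pinv{(B^*A)}}$, whereas your two-sided inequality via $B^*PA=B^*A$ avoids invoking trace altogether. For the range, the paper argues by duality: it first establishes $\nullsp{P}=\nullsp{A^*BB^*}$, applies the same to $P^*=B\pinv{(A^*B)}A^*$ to get $\nullsp{P^*}$, and then reads off $\rng{P}=\nullsp{P^*}^\perp$; your direct verification that $P\cdot AA^*B=AA^*B$ (because $\pinv{(B^*A)}(B^*A)$ fixes $\rng{A^*B}$) is more self-contained. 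Finally, for the inclusion $\nullsp{A^*BB^*}\subset\nullsp{P}$, the paper premultiplies by $\pinv{(A^*B)}$ and unwinds several identities to reach $B^*Px=0$ and then $Px=0$, while your one-line observation that $B^*v\in\nullsp{A^*B}=\nullsp{\pinv{(B^*A)}}$ is noticeably shorter. Both routes are elementary; yours trades the trace and duality tricks for a couple of extra direct inclusions.
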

\begin{proof}
    By  direct computation $P^2=P$. The rank of a projection is equal to its trace. Therefore,
    \[\Tr P=\Tr\brac*{ A\pinv{(B^*A)}B^*}=\Tr\brac*{ B^*A\pinv{(B^*A)}}=\rank(B^*A). \]
    First, we compute the null space of $P$. Assume 
    \[Px=A\pinv{(B^*A)}B^*x=0,\]
    by multiplying by $B^*$ on the left
    \[B^*A\pinv{(B^*A)}B^*x=0,\]
    that is, since $\nullsp{B^*A\pinv{(B^*A)}}=\nullsp{\brac{B^*A}^*}$ 
    \[B^*x\in\nullsp{A^*B}\Longrightarrow A^*BB^*x=0.\]
    Assume now $A^*BB^*x=0$. Multiplying this equation on the left by $\pinv{(A^*B)}$ gives
    \[\pinv{(A^*B)}A^*BB^*x=0,\]
    and therefore
    \[0=\pinv{(A^*B)}A^*BB^*x=\brac*{\pinv{(B^*A)}}^*\brac*{B^*A}^*B^*x=\brac*{\brac*{B^*A}\pinv{(B^*A)}}^*B^*x=\]
    \[=\brac*{B^*A}\pinv{(B^*A)}B^*x=B^*\brac*{A\pinv{(B^*A)}B^*}x=B^*Px.\]
    Multiplying the equation $B^*Px=0$ by $A\pinv{(B^*A)}$ on the left we see that $PPx=0$, that is, $Px=0$.
    Finally
    \[\nullsp{P}=\nullsp{A^*BB^*}.\]
    Now 
    \[P^*=B\pinv{\brac*{A^*B}}A^*,\]
    and by the previous conclusion 
    \[\nullsp{P^*}=\nullsp{B^*AA^*}.\]
    Since $\rng{P}\oplus\nullsp{P^*}$ is an orthogonal decomposition then
    \[\rng{P}=\rng{AA^*B}.\]
\end{proof}

\begin{example}
    If $A=B$, then
    \[P=A\pinv{\brac*{A^*A}}A^*=A\pinv{A}.\]
\end{example}

\begin{example}
    If $A$ and $B$ are of full row rank, then $A,B$ are surjective, hence $P=I$.
    
\end{example}

Note that if $P$ is a projection, then $\pinv{P}$ in general is not a projection.

\begin{lemma}
    Let $P\in\bK^{m\times m}$ be a projection. Then
\[\pinv{P}\text{ is a projection}\Longleftrightarrow \pinv{P}=P^*=P.\]
\end{lemma}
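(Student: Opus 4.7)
The backward direction is immediate: if $\pinv{P}=P^*=P$, then $\pinv{P}=P$ is idempotent. For the forward direction, assume $P^2=P$ and $(\pinv{P})^2=\pinv{P}$. The plan is first to identify $\pinv{P}$ with $P^*$ using a uniqueness principle for projections, and then to extract $P^*=P$ by algebraic manipulation.

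For the identification, observe that both $P^*$ and $\pinv{P}$ are idempotents with image $\rng{P^*}$ and kernel $\nullsp{P^*}$: for $P^*$ this is clear from $(P^*)^2=(P^2)^*=P^*$, and for $\pinv{P}$ it uses the standard pseudoinverse identities $\rng{\pinv{P}}=\rng{P^*}$ and $\nullsp{\pinv{P}}=\nullsp{P^*}$ together with the hypothesis $(\pinv{P})^2=\pinv{P}$. Since an idempotent is determined by its image and kernel, $\pinv{P}=P^*$; substituting into the Penrose identity $P\pinv{P}P=P$ gives the relation $PP^*P=P$.

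Now set $D=P-P^*$, so that $D^*=-D$. From $PP^*P=P=P^2$ one obtains $PDP=0$, and expanding $(P^*)^2=P^*$ as $(P-D)^2=P-D$ while cancelling $P^2=P$ yields the key identity $PD+DP=D+D^2$. Left-multiplying this by $P$ (using $P^2=P$ and $PDP=0$) gives $PD^2=0$; right-multiplying by $P$ gives $D^2P=0$. Since $(D^2)^*=(D^*)^2=D^2$, the matrix $D^2$ is Hermitian, and taking the adjoint of $PD^2=0$ also gives $D^2P^*=0$. Hence
\[
D^3=D^2(P-P^*)=D^2P-D^2P^*=0,
\]
so $(D^2)^2=D\cdot D^3=0$. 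Thus $D^2$ is a nilpotent Hermitian matrix and must vanish; then $DD^*=-D^2=0$, so $\|D\|_F^2=\Tr(DD^*)=0$, forcing $D=0$. This gives $P=P^*$ and hence $\pinv{P}=P^*=P$.

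The main obstacle I expect is the passage from $\pinv{P}=P^*$ to $P=P^*$. The four Penrose identities in isolation are satisfied by $P^*$ for any partial isometry $P$ and do not force Hermiticity on their own; it is the combination of $P^2=P$ and $(P^*)^2=P^*$ that tightens the algebra enough to make the skew-Hermitian difference $D=P-P^*$ square to zero.
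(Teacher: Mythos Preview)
Your proof is correct. The first step---identifying $\pinv{P}$ with $P^*$ via the uniqueness of a projection with prescribed range and nullspace---is exactly the paper's argument. You diverge in the second step, where the paper passes to the SVD $P=U\Sigma V^*$ and reads off $\pinv{\Sigma}=\Sigma^*$ from $\pinv{P}=P^*$, so every nonzero singular value equals $1$ and the idempotent $P$ is therefore an orthogonal projection. Your route through the skew-Hermitian difference $D=P-P^*$ is purely algebraic and avoids the SVD entirely, which is a genuine gain in elementarity, at the cost of a longer chain of identities. Incidentally, once you have $PP^*P=P$ a much shorter algebraic finish is available: combined with $P^2=P$ this gives $P(P-P^*P)=0$, so $P-P^*P=(I-P^*)P$ has range contained in $\rng{P}\cap\nullsp{P}=\{0\}$, whence $P=P^*P$ is Hermitian.
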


\begin{proof}
$(\Longrightarrow)$ 
By properties of pseudoinverse we have
\[\rng{\pinv{P}}=\rng{P^*},\quad \nullsp{\pinv{P}}=\nullsp{P^*}.\]
But $P^*$ is a projection too and projections are uniquely determined by the range and nullspace. Therefore, $\pinv{P}=P^*$. \\
$(\Longleftarrow)$  Let $P=U\Sigma V^*$ be an SVD decomposition of $P$. Then $\pinv{P}=P^*$ is equivalent to $V\pinv{\Sigma} U^*=V\Sigma^* U^* $, that is, $\Sigma^*=\pinv{\Sigma}$, which implies that all singular values of $P$
are equal to $1$. Therefore, $P$ is an orthogonal projection.
\end{proof}


The following fact can be found in~\cite[Corollary~5.6]{IpsenMeyer} or~\cite[Lemma~6.4.16~iii) or Fact~8.8.3~iii)]{Bernstein}

\begin{lemma}\label{lem:pinv_of_orth_projs}
    Let $P\in\bK^{n\times n},\ Q\in\bK^{n\times n}$ be orthogonal projections. Then
\[\pinv{\brac*{PQ}}=Q\pinv{(PQ)}P,\]
is a projection.
\end{lemma}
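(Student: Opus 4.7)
The plan is to set $M := PQ$ and prove the two claims separately: that $Q\pinv{M}P$ is a projection, and that $Q\pinv{M}P = \pinv{M}$.

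For the projection claim, I would apply Lemma~\ref{lem:proj_dim} with $A = Q$ and $B = P$: since $P, Q$ are Hermitian, $B^*A = PQ = M$, so $A\pinv{(B^*A)}B^* = Q\pinv{M}P$ is a projection by that lemma. Alternatively, one can verify this in one line using $\pinv{M}M\pinv{M} = \pinv{M}$:
\[(Q\pinv{M}P)^2 = Q\pinv{M}(PQ)\pinv{M}P = Q\pinv{M}M\pinv{M}P = Q\pinv{M}P.\]

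For the equality $Q\pinv{M}P = \pinv{M}$, the key observations are the containments
\[\rng{M} = \rng{PQ}\subset\rng{P}, \qquad \rng{M^*} = \rng{QP}\subset\rng{Q}.\]
From the preliminaries $\rng{\pinv{M}M} = \rng{M^*}$, and combining with $\pinv{M} = \pinv{M}M\pinv{M}$ plus $\rank M = \rank\pinv{M}$ gives $\rng{\pinv{M}} = \rng{M^*}\subset\rng{Q}$. Since $Q$ is the orthogonal projection onto $\rng{Q}$, it fixes every vector in $\rng{\pinv{M}}$, so $Q\pinv{M} = \pinv{M}$. Applying the same reasoning to $(\pinv{M})^* = \pinv{(M^*)}$, whose range is $\rng{M}\subset\rng{P}$, yields $P(\pinv{M})^* = (\pinv{M})^*$, equivalently $\pinv{M}P = \pinv{M}$. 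Composing gives $Q\pinv{M}P = \pinv{M}$.

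The only real obstacle is spotting the two range-containments together with the standard identification $\rng{\pinv{M}} = \rng{M^*}$; once these are in place, both halves of the lemma drop out of the absorption identities $Q\pinv{M} = \pinv{M}$ and $\pinv{M}P = \pinv{M}$ with no further machinery.
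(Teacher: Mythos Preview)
Your proof is correct. Both halves are clean: the idempotence check $(Q\pinv{M}P)^2 = Q\pinv{M}(PQ)\pinv{M}P = Q\pinv{M}P$ uses only $\pinv{M}M\pinv{M}=\pinv{M}$, and the equality $Q\pinv{M}P=\pinv{M}$ follows from the range containments $\rng{\pinv{M}}=\rng{M^*}\subset\rng{Q}$ and $\rng{(\pinv{M})^*}=\rng{M}\subset\rng{P}$, which give the absorption identities $Q\pinv{M}=\pinv{M}$ and $\pinv{M}P=\pinv{M}$.

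Note, however, that the paper does not supply its own proof of this lemma: it simply cites \cite[Corollary~5.6]{IpsenMeyer} and \cite[Lemma~6.4.16~iii) or Fact~8.8.3~iii)]{Bernstein}. So there is no in-paper argument to compare against. Your approach is self-contained and fits well with the paper's toolkit---in particular your appeal to Lemma~\ref{lem:proj_dim} with $A=Q$, $B=P$ for the projection claim is exactly in the spirit of Section~\ref{sec:proj}.
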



\begin{corollary}\label{cor:prescribed_proj}
    Assume that $V\oplus W=\bK^m$. The formula of projection onto $V$ along $W$ is
    \[P=\pinv{\brac*{P_{W^\perp}P_V}},\]
    where $P_{W^\perp}$ is an orthogonal projection onto $W^\perp$ and $P_V$ is an orthogonal projection onto $V$. 
\end{corollary}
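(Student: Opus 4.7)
The plan is to invoke uniqueness of the projection onto $V$ along $W$: it suffices to show that $P := \pinv{(P_{W^\perp}P_V)}$ is a projection with $\rng{P} = V$ and $\nullsp{P} = W$.

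First, Lemma~\ref{lem:pinv_of_orth_projs} applied to the orthogonal projections $P_{W^\perp}$ and $P_V$ rewrites
\[P = P_V \pinv{(P_{W^\perp}P_V)} P_{W^\perp},\]
which is precisely the shape $A\pinv{(B^*A)}B^*$ of Lemma~\ref{lem:proj_dim} with $A = P_V$ and $B = P_{W^\perp}$ (both self-adjoint idempotents). That lemma then delivers at once that $P$ is a projection, together with
\[\rng{P} = \rng{P_V P_V^* P_{W^\perp}} = \rng{P_V P_{W^\perp}}, \qquad \nullsp{P} = \nullsp{P_V^* P_{W^\perp}P_{W^\perp}^*} = \nullsp{P_V P_{W^\perp}}.\]

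The remaining task is to identify these two spaces from the direct sum hypothesis. The key observation is that $V \oplus W = \bK^m$ forces $V^\perp \cap W^\perp = (V+W)^\perp = 0$ and $\dim W^\perp = \dim V$. Consequently $P_V$ restricted to $W^\perp$ has trivial kernel, so by a dimension count it is an isomorphism from $W^\perp$ onto $V$. Since $P_{W^\perp}$ maps $\bK^m$ onto $W^\perp$, composing yields $\rng{P_V P_{W^\perp}} = V$. For the nullspace, $P_V P_{W^\perp}x = 0$ forces $P_{W^\perp}x \in V^\perp \cap W^\perp = 0$, hence $x \in W$; the reverse inclusion follows from $P_{W^\perp}|_W = 0$.

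The only real obstacle is the bijectivity of $P_V|_{W^\perp}$, which crucially exploits that $V$ and $W$ are complementary (though not necessarily orthogonal); without complementarity, $P_V|_{W^\perp}$ could fail to be injective and the range would shrink strictly inside $V$. Once this one fact is in hand, both the range and the nullspace identifications are one-liners, and the corollary follows from the uniqueness of the projection determined by a direct sum decomposition.
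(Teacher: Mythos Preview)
Your proof is correct. Both you and the paper first invoke Lemma~\ref{lem:pinv_of_orth_projs} to rewrite $\pinv{(P_{W^\perp}P_V)}$ as $P_V\pinv{(P_{W^\perp}P_V)}P_{W^\perp}$, then apply a projection-formula lemma. The paper, however, goes straight to Lemma~\ref{lem:proj}: with $A=P_V$ and $B=P_{W^\perp}$ one has $\rng{A}=V$ and $\nullsp{B^*}=W$, so the direct-sum hypothesis of that lemma is exactly $V\oplus W=\bK^m$, and the conclusion is immediate. You instead use the unconditional Lemma~\ref{lem:proj_dim}, which hands you $\rng{P}=\rng{P_VP_{W^\perp}}$ and $\nullsp{P}=\nullsp{P_VP_{W^\perp}}$, and then you must argue separately that these equal $V$ and $W$ via the bijectivity of $P_V|_{W^\perp}$. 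That extra step is fine and even instructive (it makes explicit where the complementarity is used), but it re-derives what Lemma~\ref{lem:proj} already packages; the paper's route is shorter.
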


\begin{proof}
    The claim follows directly from Lemma~\ref{lem:proj}.
\end{proof}

\begin{corollary}
Any projection $P\in \bK^{m\times m}$ can be written in the form $P=A\pinv{(B^*A)}B^*$. for some matrices $A\in\bK^{m\times p},\ B\in\bK^{m\times q}$. 
Moreover, the complementary projection $I-P$ is given by the formula
\[I-P=C\pinv{\brac*{DC}}D,\]
where
\[C=I-\brac*{BB^*A}\pinv{\brac*{BB^*A}},\quad D=I-\brac*{AA^*B}\pinv{\brac*{AA^*B}}.\]
\end{corollary}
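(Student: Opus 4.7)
The plan is to attack the two assertions separately. For the existence of the representation, the simplest witness is $A=P$ itself and $B=P^{*}$. Then $B^{*}A=P^{2}=P$ by idempotence, and the first Penrose identity~\eqref{eq:Penrose_cond1} immediately gives
\[
A\pinv{(B^{*}A)}B^{*}=P\pinv{P}P=P.
\]
More flexibly, one could take any $A,B$ with $\rng{A}=\rng{P}$ and $\nullsp{B^{*}}=\nullsp{P}$ and apply Lemma~\ref{lem:proj}, since the required direct-sum hypothesis then reduces to $\rng{P}\oplus\nullsp{P}=\bK^{m}$, which holds automatically for a projection.

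For the formula for $I-P$, my first step is to reinterpret $C$ and $D$ geometrically. Because $MM^{+}$ is the orthogonal projection onto $\rng{M}$, the matrix $C=I-(BB^{*}A)\pinv{(BB^{*}A)}$ is the orthogonal projection onto $\rng{BB^{*}A}^{\perp}=\nullsp{A^{*}BB^{*}}$; by Lemma~\ref{lem:proj_dim} this space equals $\nullsp{P}$. The same argument, applied to $D$, shows that $D$ is the orthogonal projection onto $\rng{AA^{*}B}^{\perp}=\rng{P}^{\perp}$, again using Lemma~\ref{lem:proj_dim}. In particular both $C$ and $D$ are Hermitian.

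The concluding step is to read off the complementary projection via Lemma~\ref{lem:proj}, with $A,B$ replaced by $C,D$. Since $D=D^{*}$, the expression $C\pinv{(DC)}D$ matches the template $C\pinv{(D^{*}C)}D^{*}$ verbatim, and the direct-sum hypothesis becomes
\[
\rng{C}\oplus\nullsp{D^{*}}=\nullsp{P}\oplus\rng{P}=\bK^{m},
\]
which is true because $P$ is a projection. Lemma~\ref{lem:proj} then identifies $C\pinv{(DC)}D$ as the projection onto $\rng{C}=\nullsp{P}$ along $\nullsp{D}=\rng{P}$, which is precisely $I-P$. The only subtle point I anticipate is the bookkeeping that matches $C\pinv{(DC)}D$ to the template of Lemma~\ref{lem:proj}; once the Hermiticity of $C$ and $D$ is noted, the argument reduces to two applications of Lemma~\ref{lem:proj_dim} and a single invocation of Lemma~\ref{lem:proj}.
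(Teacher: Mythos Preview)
Your argument is correct. For the second assertion your route is exactly the one the paper has in mind: identify $C$ and $D$ as the orthogonal projections onto $\nullsp{P}$ and $\rng{P}^\perp$ via Lemma~\ref{lem:proj_dim}, and then feed these Hermitian matrices back into Lemma~\ref{lem:proj} using $\nullsp{P}\oplus\rng{P}=\bK^m$.

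For the first assertion you take a slightly more direct path than the paper. The paper's intended justification is Corollary~\ref{cor:prescribed_proj}: writing $V=\rng{P}$, $W=\nullsp{P}$, one has $P=\pinv{(P_{W^\perp}P_V)}$, and Lemma~\ref{lem:pinv_of_orth_projs} rewrites this as $P_V\pinv{(P_{W^\perp}P_V)}P_{W^\perp}$, i.e.\ the form $A\pinv{(B^*A)}B^*$ with $A=P_V$ and $B=P_{W^\perp}$. Your witness $A=P$, $B=P^*$ bypasses both auxiliary results and reduces the claim to the single Penrose identity $P\pinv{P}P=P$; it is shorter, and it also makes transparent that the representation works without any rank or direct-sum hypothesis on the pair $(A,B)$.
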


\begin{lemma}\label{lem:rol_implies_proj}
    If the reverse order law holds for $B^*$ and $A$, that is, $\pinv{\brac*{B^*A}}=\pinv{A}\pinv{\brac*{B^*}}$ and
    \[P=A\pinv{(B^*A)}B^*,\]
    then $P=A\pinv{A}B\pinv{B}$ and it is a projection onto $\rng{A}\cap\rng{B}$ along
    $\brac*{\nullsp{A^*}\cap \rng{B}}+\nullsp{B^*}$. The last sum is an orthogonal decomposition, and $P$ is an orthogonal projection.
\end{lemma}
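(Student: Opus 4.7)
The plan is to substitute the reverse order law into the formula for $P$, simplify to $P=A\pinv{A}B\pinv{B}$, and then use Lemma~\ref{lem:proj_dim} together with the equivalent form of the reverse order law given by Greville's condition~\eqref{eq:Grevilles_cond} to identify the range and nullspace, finally checking orthogonality.

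First I would substitute $\pinv{(B^*A)}=\pinv{A}\pinv{(B^*)}$ into $P=A\pinv{(B^*A)}B^*$ to get $P=A\pinv{A}\pinv{(B^*)}B^*$. Since $\pinv{(B^*)}=\pinv{B}^*$ and $B\pinv{B}$ is Hermitian, $\pinv{(B^*)}B^*=(B\pinv{B})^*=B\pinv{B}$, giving the asserted $P=A\pinv{A}B\pinv{B}$. Next, Greville's condition applied to $\pinv{(B^*A)}=\pinv{A}\pinv{(B^*)}$ translates into
\[\rng{AA^*B}\subset\rng{B},\qquad \rng{BB^*A}\subset\rng{A}.\]
By Lemma~\ref{lem:proj_dim}, $\rng{P}=\rng{AA^*B}$ lies in $\rng{A}$ trivially and in $\rng{B}$ by the first inclusion. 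The reverse inclusion $\rng{A}\cap\rng{B}\subset\rng{P}$ is immediate because $A\pinv{A}$ and $B\pinv{B}$ both fix any $v\in\rng{A}\cap\rng{B}$, whence $Pv=v$. For the nullspace, $Px=0$ forces $B\pinv{B}x\in\nullsp{A^*}\cap\rng{B}$, and then the orthogonal split $x=B\pinv{B}x+(I-B\pinv{B})x$ exhibits $x$ as the sum of a vector in $\nullsp{A^*}\cap\rng{B}$ and a vector in $\nullsp{B^*}$; the reverse containment is equally routine. Orthogonality of the displayed sum is immediate since $\nullsp{A^*}\cap\rng{B}\subset\rng{B}=\nullsp{B^*}^\perp$.

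The main obstacle will be verifying that $P$ is an \emph{orthogonal} projection. Given the range and nullspace, this reduces to the identity
\[(\nullsp{A^*}\cap\rng{B})+\nullsp{B^*}=\nullsp{A^*}+\nullsp{B^*},\]
because $(\rng{A}\cap\rng{B})^\perp=\nullsp{A^*}+\nullsp{B^*}$. The nontrivial inclusion is equivalent to the claim that the orthogonal projection $B\pinv{B}$ preserves the subspace $\nullsp{A^*}$. This I would deduce from the second Greville inclusion $\rng{BB^*A}\subset\rng{A}$: it says $BB^*$ preserves $\rng{A}$, hence by self-adjointness also preserves $\nullsp{A^*}$; and since $B\pinv{B}$ is a polynomial in $BB^*$ with no constant term (Lagrange interpolation at the nonzero eigenvalues of $BB^*$), it inherits the same invariance on $\nullsp{A^*}$. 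This spectral step is the only delicate point in the argument.
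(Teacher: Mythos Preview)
Your argument is correct, but the final paragraph is an unnecessary detour. Once you have established $\rng{P}=\rng{A}\cap\rng{B}$ and $\nullsp{P}=(\nullsp{A^*}\cap\rng{B})+\nullsp{B^*}$, the orthogonality of $P$ is immediate: since $\rng{P}\subset\rng{A}$ it is orthogonal to $\nullsp{A^*}\cap\rng{B}$, and since $\rng{P}\subset\rng{B}$ it is orthogonal to $\nullsp{B^*}$. For any idempotent, $\rng{P}\perp\nullsp{P}$ already forces $\nullsp{P}=\rng{P}^\perp$ by dimension count, so there is no need to verify the identity $(\nullsp{A^*}\cap\rng{B})+\nullsp{B^*}=\nullsp{A^*}+\nullsp{B^*}$ separately, and the spectral-interpolation step about $B\pinv{B}$ being a polynomial in $BB^*$ can be dropped entirely. (The identity is true and your argument for it is valid; it is simply not needed.)

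Compared with the paper, your route differs mainly in how the range is identified. The paper notes that $P=A\pinv{A}\,B\pinv{B}$ is a product of two orthogonal projections which is already known to be idempotent, and then invokes Lemma~\ref{lem:prod_of_proj} to read off $\rng{P}=\rng{A}\cap\rng{B}$ directly, without appealing to Lemma~\ref{lem:proj_dim} or Greville's condition. Your approach via $\rng{P}=\rng{AA^*B}$ together with the Greville inclusion $\rng{AA^*B}\subset\rng{B}$ is a legitimate alternative and has the virtue of making explicit exactly where the reverse order hypothesis enters. The nullspace computations in the two proofs are essentially identical, and the paper likewise closes by observing (without the extra step) that the range and nullspace are orthogonal.
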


\begin{proof}
    \[P=A\pinv{A}\pinv{\brac*{B^*}}B^*=A\pinv{A}\brac*{B\pinv{B}}^*=A\pinv{A}B\pinv{B}.\]
    Therefore, $P$ is a projection and it is a product of two orthogonal projections. Moreover
    \[\rng{A\pinv{A}}=\rng{A},\quad \rng{B\pinv{B}}=\rng{B}.\]
    By Lemma~\ref{lem:prod_of_proj} $\rng{P}=\rng{A}\cap\rng{B}$.
    The preimage of $v\in\nullsp{A\pinv{A}}=\nullsp{A^*}$ under $B\pinv{B}$ is non--empty if and only if $v\in\rng{B\pinv{B}}=\rng{B}$. In such a case, the fiber is $v+\nullsp{B\pinv{B}}=v+\nullsp{B^*}$. Moreover, the range and the null space of $P$ are orthogonal.
\end{proof}

\begin{corollary}
    If the reverse order law $\pinv{(AB)}=\pinv{B}\pinv{A}$ holds, then $\dim\rng{B}\cap\rng{A^*}=\rank(AB)$.
\end{corollary}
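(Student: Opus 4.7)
The plan is to reduce the corollary to a direct application of Lemma~\ref{lem:rol_implies_proj} combined with the rank statement from Lemma~\ref{lem:proj_dim}, by a judicious relabeling of the matrices. In Lemma~\ref{lem:rol_implies_proj} the hypothesis is the reverse order law for the pair $(B^*,A)$, and the resulting projection $A\pinv{(B^*A)}B^*$ is an orthogonal projection onto $\rng{A}\cap\rng{B}$. The corollary's hypothesis $\pinv{(AB)}=\pinv{B}\pinv{A}$ is exactly this reverse order law after substituting $B^*\mapsto A$ and $A\mapsto B$ in the lemma's notation, i.e.\ applying the lemma to the pair (new $A$, new $B$)$=(B,A^*)$.

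With this substitution, Lemma~\ref{lem:rol_implies_proj} provides that
\[P=B\pinv{(AB)}A\]
is an orthogonal projection onto $\rng{B}\cap\rng{A^*}$. In particular $\rank P=\dim\brac*{\rng{B}\cap\rng{A^*}}$.

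On the other hand, Lemma~\ref{lem:proj_dim} applied in the same substituted form gives $\rank P=\rank(AB)$. Equating the two expressions for $\rank P$ yields $\dim\brac*{\rng{B}\cap\rng{A^*}}=\rank(AB)$, as desired. The only potential subtlety is making sure the renaming is consistent, in particular that the hypothesis of Lemma~\ref{lem:rol_implies_proj} really reads $\pinv{(AB)}=\pinv{B}\pinv{A}$ after substitution, which it does. No further computation is needed.
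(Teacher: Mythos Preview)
Your proof is correct and is exactly the argument the paper intends: the corollary is stated without proof immediately after Lemma~\ref{lem:rol_implies_proj}, and your relabeling $(A_{\text{old}},B_{\text{old}})=(B,A^*)$ together with the rank identity from Lemma~\ref{lem:proj_dim} is precisely how it follows. The only minor remark is that the substitution check is the whole content, and you have carried it out correctly.
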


The proof of the following Lemma is ommited.

\begin{lemma}\label{lem:prod_of_proj}
    Let $P,Q\in\bK^{n\times n}$ be matrices of two orthogonal projections, i.e., $P^2=P, Q^2=Q$ and $P^*=P,Q^*=Q$. Then
    \[PQv=v\Longleftrightarrow v\in\rng{P}\cap\rng{Q}.\]
    In particular, if $PQ$ is a projection, then $\rng{PQ}=\rng{P}\cap\rng{Q}$.
\end{lemma}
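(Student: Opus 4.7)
The plan is to prove the biconditional in two parts and then deduce the ``in particular'' claim as a one-line corollary, using only the two defining properties of orthogonal projections: idempotence and the fact that $\norm{Pv} \le \norm{v}$ with equality exactly when $v\in\rng{P}$ (equivalently $Pv=v$).

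For the easy direction $(\Longleftarrow)$, suppose $v\in\rng{P}\cap\rng{Q}$. Since $P$ and $Q$ are (in particular) projections onto their ranges, $v\in\rng{Q}$ forces $Qv=v$, and then $v\in\rng{P}$ forces $PQv=Pv=v$. No orthogonality is needed here.

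For the hard direction $(\Longrightarrow)$, suppose $PQv=v$. From $v=P(Qv)$ we immediately read off $v\in\rng{P}$. The genuine content is showing $v\in\rng{Q}$, and this is where orthogonality enters. I would estimate
\[
\norm{v} = \norm{PQv} \le \norm{Qv} \le \norm{v},
\]
using that orthogonal projections have operator norm at most $1$. Equality throughout gives $\norm{PQv}=\norm{Qv}$, and applying the equality case of $\norm{P\cdot}\le\norm{\cdot}$ to $w=Qv$ yields $PQv=Qv$. Combined with the hypothesis $PQv=v$, this gives $Qv=v$, hence $v\in\rng{Q}$, as required. The main obstacle is precisely that this step genuinely requires both $P$ and $Q$ to be self-adjoint — for a general (oblique) product of two projections, $PQv=v$ does not force $v$ to lie in $\rng{Q}$ — so the proof must explicitly invoke the norm estimate (or, equivalently, the characterization $\rng{P}=\{v:Pv=v\}$ combined with $\ker P = (\rng P)^\perp$).

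For the final clause, assume $PQ$ is a projection. Then $v\in\rng{PQ}$ if and only if $PQv=v$, and by the equivalence just proved this is the same as $v\in\rng{P}\cap\rng{Q}$. Thus $\rng{PQ}=\rng{P}\cap\rng{Q}$.
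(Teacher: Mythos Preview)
Your argument is correct. Both directions of the biconditional are handled cleanly: the $(\Longleftarrow)$ direction is immediate from idempotence, and for $(\Longrightarrow)$ the norm chain $\norm{v}=\norm{PQv}\le\norm{Qv}\le\norm{v}$ together with the equality case for orthogonal projections is exactly the right tool. One small simplification: from the same chain you already have $\norm{Qv}=\norm{v}$, which by the equality case for $Q$ gives $Qv=v$ directly, without the detour through $PQv=Qv$; either route is fine. The final clause follows just as you say.

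As for comparison with the paper: the paper explicitly omits the proof of this lemma, so there is no argument to compare against. Your proof would fill that gap adequately. Your side remark that self-adjointness of both $P$ and $Q$ is genuinely needed is also correct and worth keeping --- an oblique example such as $P$ projecting onto $\operatorname{span}\{e_1\}$ along $\operatorname{span}\{e_1+e_2\}$ and $Q$ onto $\operatorname{span}\{e_2\}$ along $\operatorname{span}\{e_1+e_2\}$ gives $PQ=P$, so $PQe_1=e_1$ while $e_1\notin\rng{Q}$.
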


\begin{lemma}
    Let $A\in\bK^{m\times n},\ B\in\bK^{n\times k}$.
    Then \[\pinv{(AB)}=\pinv{B}\pinv{A}\Longleftrightarrow B\pinv{\brac{AB}}A\text{ is an orthogonal projection}\Longleftrightarrow \]
\[\Longleftrightarrow \rng{BB^*A^*}\oplus\nullsp{B^*A^*A}\text{ is an orthogonal decomp}\Longleftrightarrow \rng{BB^*A^*}=\rng{A^*AB}.\]

\end{lemma}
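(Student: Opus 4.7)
The plan is to set $P' = B\pinv{(AB)}A$ and prove the chain of equivalences by cycling through them, using Lemma~\ref{lem:proj_dim} as the structural engine. Label the four conditions (1)--(4) in the order they appear.

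First, I apply Lemma~\ref{lem:proj_dim} with the substitution $(A,B)\mapsto(B,A^*)$, for which $B^*A$ becomes $AB$ and the projection $A\pinv{(B^*A)}B^*$ becomes exactly $P'$. This yields that $P'$ is always a projection with
\[\rng{P'}=\rng{BB^*A^*}, \quad \nullsp{P'}=\nullsp{B^*A^*A}, \quad \rank P' = \rank(AB).\]
In addition, Lemma~\ref{lem:cancel_in_rowsp_and_colsp} gives $\nullsp{A^*AB}=\nullsp{AB}$, so $\dim\rng{A^*AB}=\rank(AB)$ as well.

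Next I handle the block (2) $\Leftrightarrow$ (3) $\Leftrightarrow$ (4). A projection is orthogonal exactly when its range and nullspace are perpendicular, so (2) is equivalent to the always-direct sum $\rng{P'}\oplus\nullsp{P'}$ being an orthogonal decomposition, which is (3). Using $\nullsp{B^*A^*A}=\rng{A^*AB}^\perp$, this perpendicularity is in turn equivalent to $\rng{BB^*A^*}\subset\rng{A^*AB}$; since both subspaces have dimension $\rank(AB)$, the inclusion upgrades to the equality in (4).

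Finally, I close the cycle via (4) $\Rightarrow$ (1) $\Rightarrow$ (2). For (4) $\Rightarrow$ (1): the columns of $A^*AB$ always lie in $\rng{A^*}$ and those of $BB^*A^*$ always lie in $\rng{B}$, so the equality in (4) automatically forces both Greville conditions in~(\ref{eq:Grevilles_cond}) to hold, giving the reverse order law. For (1) $\Rightarrow$ (2): I apply Lemma~\ref{lem:rol_implies_proj} with the same substitution $(A,B)\mapsto(B,A^*)$; its hypothesis becomes exactly (1) and its conclusion states that $P'$ is an orthogonal projection onto $\rng{B}\cap\rng{A^*}$, giving (2). The main obstacle is purely bookkeeping: aligning the substitutions into Lemmas~\ref{lem:proj_dim} and~\ref{lem:rol_implies_proj} so that $P'$ matches their templates; once this is done, the remaining work reduces to a short dimension count and a direct appeal to Greville's condition.
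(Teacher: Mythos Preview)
Your proof is correct and largely parallels the paper's: both handle the block $(2)\Leftrightarrow(3)\Leftrightarrow(4)$ via Lemma~\ref{lem:proj_dim} and both obtain $(1)\Rightarrow(2)$ from Lemma~\ref{lem:rol_implies_proj}. The genuine difference is how you return to~$(1)$. The paper proves $(2)\Rightarrow(1)$ directly: from $P'=P'^*$ it writes $B\pinv{(AB)}A=A^*\pinv{(B^*A^*)}B^*$, multiplies by $A$ on the left and $B$ on the right to get $AB=AA^*\pinv{(B^*A^*)}B^*B$, and then invokes an external characterization of the reverse order law due to Tian. You instead close the cycle through $(4)\Rightarrow(1)$ by observing that the equality $\rng{BB^*A^*}=\rng{A^*AB}$, combined with the trivial inclusions $\rng{A^*AB}\subset\rng{A^*}$ and $\rng{BB^*A^*}\subset\rng{B}$, immediately yields both Greville conditions~(\ref{eq:Grevilles_cond}). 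Your route is more self-contained, since Greville's condition is already recorded in the paper while Tian's theorem is an outside reference; the paper's route, on the other hand, gives an explicit algebraic identity linking $(2)$ to $(1)$ without passing through the dimension count.
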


\begin{proof}
The last three equivalences are straightforward with the use of Lemma~\ref{lem:proj}. For the first equivalence $(\Longrightarrow)$ see Lemma~\ref{lem:rol_implies_proj} \\
    $(\Longleftarrow)$ matrix of a orthogonal projection is Hermitian, therefore
    \[B\pinv{\brac{AB}}A=A^*\pinv{\brac{B^*A^*}}B^*,\]
    pre-multiplying by $A$ and post-multiplying by $B$ gives
    \[AB=AA^*\pinv{\brac{B^*A^*}}B^*B,\]
    which implies the reverse order law by Tian~\cite[Theorem~11.1$\langle 3\rangle$]{Tian512}.
\end{proof}

\section{Generalized Wedderburn Rank Reduction}
The following theorem generalizes classical Wedderburn rank reduction, cf.~Lemma~\ref{lem:classic_wedd}. The idea of proof is similar.

\begin{theorem}[generalized Wedderburn rank reduction formula]\label{thm:gen_wedd}
Let $A\in\bK^{m\times n},\ X\in\bK^{n\times p},\ Y\in\bK^{m\times q}$. Assume that $\rank Y^*AX=k$ (matrix $Y^*AX$ is possibly a non--square matrix). Then
\[\rank\brac*{A-(AX)\pinv{(Y^*AX)}(Y^*A)}=\rank A -k.\]
\end{theorem}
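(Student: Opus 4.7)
My plan is to recast the reduction as $\widetilde A := A - (AX)\pinv{(Y^*AX)}(Y^*A) = (I-P)A$, where $P := (AX)\pinv{(Y^*AX)}Y^*$, and then harness Lemma~\ref{lem:proj_dim}. Matching $P$ to the pattern of that lemma with $AX$ playing the role of the first argument and $Y$ playing the role of the second, I obtain at once that $P$ is a projection with $\rank P = \rank(Y^*AX) = k$ and
\[\rng{P} = \rng{(AX)(AX)^*Y} = \rng{AXX^*A^*Y}\subset \rng{A}.\]

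The key structural consequence is that $I-P$ is a (not necessarily orthogonal) projection whose null space equals $\rng{P}$, a $k$-dimensional subspace sitting inside $\rng{A}$. Since $\rng{\widetilde A} = (I-P)\rng{A}$, the standard identity $\dim \pi(V) = \dim V - \dim(V\cap \nullsp{\pi})$ applied to $\pi = I-P$ and $V = \rng{A}$ gives
\[\dim\rng{\widetilde A} = \dim\rng{A} - \dim\brac*{\rng{A}\cap \rng{P}} = \rank A - k,\]
where the last step uses $\rng{P}\subset\rng{A}$ together with $\dim\rng{P}=k$, which force $\rng{A}\cap\rng{P} = \rng{P}$.

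The principal obstacle compared with the classical proof is that the naive analogue $\widetilde A X = 0$ no longer holds: a direct computation yields $\widetilde A X = AX\brac*{I - \pinv{M}M}$ with $M = Y^*AX$, and this vanishes only when $\rng{X}$ sits inside $\rng{\pinv{M}M}$, equivalently when $\rank AX = k$, which need not occur once $Y^*AX$ is allowed to be rectangular. So the classical ``kernel grows by exactly $k$'' argument does not transfer directly. The projection reformulation bypasses this entirely: the work moves from the null space of $\widetilde A$ (which is hard to pin down in general) to its range, where the projection $P$ is manifestly onto a $k$-dimensional subspace of $\rng{A}$. Once the factorization $\widetilde A = (I-P)A$ is recognized, everything else is bookkeeping driven by Lemma~\ref{lem:proj_dim}.
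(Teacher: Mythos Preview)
Your proof is correct and takes a genuinely different route from the paper's.

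The paper works on the null-space side, mirroring the classical argument but with a twist: rather than trying to show $\widetilde A X = 0$ (which, as you note, fails in general), it observes that $\widetilde A\,\bigl(X\pinv{(Y^*AX)}\bigr) = 0$, verifies that $\rank X\pinv{(Y^*AX)} = k$ and that $\rng{X\pinv{(Y^*AX)}}\cap\nullsp{A}=0$, and then checks that every $x$ with $\widetilde A x=0$ lies in $\nullsp{A}\oplus\rng{X\pinv{(Y^*AX)}}$. So the ``kernel grows by exactly $k$'' strategy \emph{does} transfer, just with $X\pinv{(Y^*AX)}$ in place of $X$; your remark that it does not transfer directly is accurate only for the naive version.

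Your approach is cleaner once Lemma~\ref{lem:proj_dim} is in hand: factoring $\widetilde A=(I-P)A$ and reading off from that lemma that $P$ is a rank-$k$ projection with $\rng{P}=\rng{AXX^*A^*Y}\subset\rng{A}$ makes the rank drop immediate via rank--nullity for $(I-P)|_{\rng{A}}$. What the paper's route buys is the explicit identification $\nullsp{\widetilde A}=\nullsp{A}\oplus\rng{X\pinv{(Y^*AX)}}$, which is quoted and used in the lemma immediately following Theorem~\ref{thm:gen_wedd}. Your range-side argument gives the rank but not this null-space decomposition without additional work.
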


\begin{proof}
    Let $B=A-(AX)\pinv{(Y^*AX)}(Y^*A)$. Clearly, $\nullsp{A}\subset\nullsp{B}$.
    We need to find additional $k$ linearly independent vectors in $\nullsp{B}$. 
    Again, as $B(X\pinv{(Y^*AX)})=0$ 
    \[\rng{X\pinv{(Y^*AX)}}\subset \nullsp{B}.\]
    Note that 
    \[\rng{\pinv{(Y^*AX)}}=\rng{X^*A^*Y}\subset \rng{X^*},\]
    but $\rng{X^*}\cap \nullsp{X}=0$ therefore
    \[\rank X\pinv{(Y^*AX)}=k.\]
    We claim that 
    \[\rng{X\pinv{(Y^*AX)}}\cap \nullsp{A}=0.\]
    To show this assume that there exists $y\in\bK^q$ such that
    \[x=X\pinv{(Y^*AX)}y,\quad Ax=0,\]
    i.e. $x\in\bK^n$ lies in the intersection. Multiplying the first equation by $Y^*A$ on the left and applying the second one gives
    \[(Y^*AX)\pinv{(Y^*AX)}y=0.\]
    This means that $y$ lies in the kernel of the orthogonal projection onto $\rng{Y^*AX}$, i.e. 
    \[y\in\nullsp{(Y^*AX)^*}=\nullsp{\pinv{(Y^*AX)}}.\]
    Therefore $x=0$. To finish the proof it is enough to show that
    \[\nullsp{B}=\nullsp{A}\oplus\rng{X\pinv{(Y^*AX)}}.\]
    By the previous considerations, the right hand side is contained in the left one. Let $Bx=0$. That is
    \[A(x-X\pinv{(Y^*AX)}(Y^*A)x)=0,\]
    i.e. $x-X\pinv{(Y^*AX)}(Y^*A)x\in\nullsp{A}$ and the second term is in $\rng{X\pinv{(Y^*AX)}}$. This finishes the proof.
    \end{proof}
\begin{remark}
    The above theorem, with  matrices $X,Y$ suitably chosen, implies the generalized Nystr\"om decomposition, which implies the CUR decomposition.
\end{remark}
\begin{corollary}
    Let $X=A^*,Y=A$ then $\rank XAY^*=\rank A$ (images are orthogonal to kernels) and therefore
    \[A=AA^*\pinv{\brac*{A^*AA^*}}A^*A.\]
\end{corollary}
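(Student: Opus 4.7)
The plan is to apply Theorem~\ref{thm:gen_wedd} directly with $X=A^*$ and $Y=A$, so that $Y^*AX = A^*AA^*$, and to observe that the rank $k$ extracted by the reduction equals the full $\rank A$; the reduced matrix must then be zero, which rearranges to the claimed identity.

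The one nontrivial verification is $\rank(A^*AA^*) = \rank A$. I would use the orthogonal decomposition $\bK^m=\rng{A}\oplus\nullsp{A^*}$ (and its analogue with $A$ replaced by $A^*$), which yields the intersection conditions $\rng{A^*}\cap\nullsp{A}=0$ and $\rng{A}\cap\nullsp{A^*}=0$. Algebraically: if $A^*AA^*x=0$, premultiplying by $x^*$ gives $\|AA^*x\|^2=0$, hence $AA^*x=0$, and premultiplying that by $x^*$ gives $\|A^*x\|^2=0$. Therefore $\nullsp{A^*AA^*}=\nullsp{A^*}$, and the rank identity follows. This is precisely the parenthetical ``images are orthogonal to kernels'' remark in the corollary.

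With $k=\rank A$, Theorem~\ref{thm:gen_wedd} gives
\[\rank\brac*{A-AA^*\pinv{\brac*{A^*AA^*}}A^*A}=\rank A-k=0,\]
so the bracketed matrix vanishes and $A=AA^*\pinv{\brac*{A^*AA^*}}A^*A$ as claimed. I do not foresee any obstacle beyond this short rank computation; everything else is a direct substitution into the already-proven generalized Wedderburn formula.
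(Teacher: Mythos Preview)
Your proposal is correct and matches the paper's intended argument: the corollary is stated without proof immediately after Theorem~\ref{thm:gen_wedd}, with only the parenthetical hint ``images are orthogonal to kernels'' for the rank equality, and your norm argument is exactly a concrete realization of that remark. The reduction to rank zero then forces the identity, just as you wrote.
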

\begin{lemma}
    Let $B=A-(AX)\pinv{(Y^*AX)}(Y^*A)$. If $A$ is a square matrix and $A^2=A$ then $B^2=B$.
    That is reduction of a projection is a projection.
\end{lemma}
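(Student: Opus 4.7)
The plan is to write $B = A - P$ with $P = AX(Y^*AX)^+Y^*A$, expand $B^2$, and show everything collapses using the single hypothesis $A^2 = A$ together with the defining identity $M M^+ M = M$ for the Moore--Penrose pseudoinverse.

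First I would verify that $P$ itself is idempotent. Computing directly,
\[
P^2 \;=\; AX(Y^*AX)^+\bigl(Y^*A\cdot AX\bigr)(Y^*AX)^+Y^*A,
\]
and the middle factor $Y^*A^2X$ equals $Y^*AX$ because $A^2=A$. Hence $P^2 = AX(Y^*AX)^+(Y^*AX)(Y^*AX)^+Y^*A = AX(Y^*AX)^+Y^*A = P$ by the Penrose identity \eqref{eq:Penrose_cond2}. (Alternatively one could invoke Lemma~\ref{lem:proj_dim} directly with the roles of $A$ and $B$ there played by $AX$ and $A^*Y$, again after using $A^2=A$.)

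Next I would check the two mixed products:
\[
AP \;=\; A^2X(Y^*AX)^+Y^*A \;=\; AX(Y^*AX)^+Y^*A \;=\; P,
\]
\[
PA \;=\; AX(Y^*AX)^+Y^*A^2 \;=\; AX(Y^*AX)^+Y^*A \;=\; P,
\]
both again by $A^2=A$. Combining these with $P^2=P$ yields
\[
B^2 \;=\; (A-P)^2 \;=\; A^2 - AP - PA + P^2 \;=\; A - P - P + P \;=\; A - P \;=\; B,
\]
which is the claim.

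There is no real obstacle here: the only potentially delicate point is confirming $P^2 = P$, and this is immediate once the factor $Y^*A^2X$ is simplified to $Y^*AX$. Everything else is bookkeeping with the hypothesis $A^2=A$, so the proof is essentially one line of expansion after three one-line identities.
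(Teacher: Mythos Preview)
Your proof is correct and follows the same route as the paper: expand $(A-P)^2$ and use $A^2=A$ to reduce $AP$, $PA$, and $P^2$ all to $P$. The paper compresses these three verifications into a single displayed line, while you spell them out separately, but the underlying computation is identical.
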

\begin{proof}
    \[B^2=A^2-2(AX)\pinv{(Y^*AX)}(Y^*A)+(AX)\pinv{(Y^*AX)}(Y^*A)(AX)\pinv{(Y^*AX)}(Y^*A)=B.\]
\end{proof}
\begin{corollary}\label{cor:red_of_As}
    If $B$ is a reduction of $A$ by $X,Y$ then $B^*$ is a reduction of $A^*$ by $Y,X$. Therefore, reduction of an orthogonal projection is an orthogonal projection.
\end{corollary}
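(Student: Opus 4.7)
The plan is a direct computation using the interaction of conjugate transposition with pseudoinversion. The one identity needed beyond the standard rules $(UV)^* = V^*U^*$ is
\[(\pinv{M})^* = \pinv{(M^*)},\]
which is verified in one line by checking that $(\pinv{M})^*$ satisfies the four Penrose conditions for $M^*$; equations (\ref{eq:Penrose_cond1})--(\ref{eq:Penrose_cond4}) are each invariant under conjugate transposition of both sides.

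For the first claim, I would start from $B = A - (AX)\pinv{(Y^*AX)}(Y^*A)$ and apply $*$ termwise. The inner factor $\pinv{(Y^*AX)}$ becomes $\pinv{(X^*A^*Y)}$ by the identity above, the left factor $(AX)^* = X^*A^*$ moves to the right, and the right factor $(Y^*A)^* = A^*Y$ moves to the left, giving
\[B^* = A^* - (A^*Y)\pinv{(X^*A^*Y)}(X^*A^*).\]
Matching this against the formula of Theorem~\ref{thm:gen_wedd} applied to $A^*$, with $Y$ playing the role of $X$ and $X$ playing the role of $Y$, shows that $B^*$ is precisely the generalized Wedderburn reduction of $A^*$ by the pair $(Y,X)$.

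For the second claim I would specialize to the symmetric choice $X=Y$, which is the only way the swapped-order reduction can equal the original. Assume $A$ is an orthogonal projection, so $A^*=A$ and $A^2=A$. The preceding lemma (reduction of a projection is a projection) gives $B^2=B$. The first part of the corollary, with $Y=X$, says $B^*$ is the reduction of $A^* = A$ by $(X,X)$, which is the very same reduction as $B$; hence $B^* = B$. A Hermitian idempotent is an orthogonal projection, which finishes the proof.

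There is no real obstacle: the computation is short and the only nontrivial piece is the pseudoinverse-adjoint commutation, which is standard. I would take care only to note explicitly the implicit hypothesis $X=Y$ in the second sentence, since otherwise swapping $(X,Y)\mapsto(Y,X)$ need not preserve the reduction and $B^*=B$ can fail.
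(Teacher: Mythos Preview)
Your argument is correct and is exactly the natural direct computation; the paper states the corollary without proof, so there is nothing to compare beyond noting that your conjugate-transpose calculation is the intended one-line justification.

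Your closing caveat is worth emphasizing. As written, the corollary's second sentence does not restrict to $X=Y$, yet that hypothesis is genuinely needed: with $A=I$, $X=e_1$, $Y=e_1+e_2$ one gets a non-Hermitian reduction $B$. The paper itself imposes $X=Y$ later in Remark~\ref{rem:inherited}~i) and~ii) when discussing inheritance of Hermitian and skew-Hermitian structure, so the omission in the corollary is a minor slip in the statement rather than a gap in your proof. Your explicit identification of this implicit assumption is the right move.
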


\begin{lemma}
    Generalized Wedderburn rank reduction can be written in form
    \[B=A-PA=(I-P)A,\quad B=A-AQ=A(I-Q).\]
    where $P=(AX)\pinv{(Y^*AX)}Y^*$ is a rank $k$ projection onto $\rng{AXX^*A^*Y}$ along $\nullsp{X^*A^*YY^*}=\nullsp{\brac*{YY^*AX}^*}$
    and $Q=X\pinv{(Y^*AX)}Y^*A$ is a rank $k$ projection onto $\rng{XX^*A^*Y}$ along $\nullsp{X^*A^*YY^*A}$.
    
    Moreover
    \[\dim\rng{P}=\dim\rng{Q}=k,\quad \dim\nullsp{P}=\dim\nullsp{Q}=m-k.\]
    In particular 
    \[\rng{B}=\rng{A}\cap\nullsp{\pinv{\brac*{Y^*AX}}Y^*},\quad\nullsp{B}=\nullsp{A}\oplus
    \rng{X\pinv{(Y^*AX)}}.\]
    Note that 
    \[\rng{AXX^*A^*Y}=\rng{AX\pinv{(Y^*AX)}}.\]
\end{lemma}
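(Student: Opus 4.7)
The plan is to reduce every claim to an application of Lemma~\ref{lem:proj_dim} or to an identity already established inside the argument for Theorem~\ref{thm:gen_wedd}. The factorizations $B=(I-P)A=A(I-Q)$ are immediate from expanding the definitions of $P$ and $Q$, so the real content is identifying $P$ and $Q$ as projections with the stated range and null space, and pinning down $\rng{B}$.

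For $P=(AX)\pinv{(Y^*AX)}Y^*$, I would set $A'=AX$ and $B'=Y$ and observe that $P=A'\pinv{(B'^{*}A')}B'^{*}$. Lemma~\ref{lem:proj_dim} then yields in one stroke that $P$ is a projection with $\rng{P}=\rng{A'A'^{*}B'}=\rng{AXX^*A^*Y}$, $\nullsp{P}=\nullsp{A'^{*}B'B'^{*}}=\nullsp{X^*A^*YY^*}=\nullsp{(YY^*AX)^{*}}$, and $\rank P=\rank(Y^*AX)=k$. Rewriting $Q=X\pinv{((A^*Y)^{*}X)}(A^*Y)^{*}$ and reapplying the same lemma with $A'=X$, $B'=A^*Y$ gives the symmetric statements for $Q$. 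The dimension equalities in the lemma then follow from rank--nullity applied to each of $P$ and $Q$.

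The description of $\nullsp{B}$ is already extracted inside the proof of Theorem~\ref{thm:gen_wedd}, so I would simply cite it. For $\rng{B}$ I would prove two inclusions. The forward inclusion $\rng{B}\subseteq\rng{A}\cap\nullsp{\pinv{(Y^*AX)}Y^*}$ is immediate from $B=A(I-Q)$ (containment in $\rng{A}$), together with multiplying $Bw$ on the left by $\pinv{(Y^*AX)}Y^*$ and using $M^{+}MM^{+}=M^{+}$ with $M=Y^*AX$ to cancel the two terms. For the reverse, given $v=Au$ with $\pinv{(Y^*AX)}Y^*v=0$, one computes $Bu=Au-(AX)\pinv{(Y^*AX)}Y^*v=v$, exhibiting $v\in\rng{B}$.

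Finally, the trailing identity $\rng{AXX^*A^*Y}=\rng{AX\pinv{(Y^*AX)}}$ follows because the left side equals $\rng{P}$, which is trivially contained in $\rng{AX\pinv{(Y^*AX)}}$, and the latter has rank at most $\rank\pinv{(Y^*AX)}=k$, forcing equality by dimension. I do not expect a genuine obstacle; the lemma is really a bookkeeping assembly of facts already established in Lemma~\ref{lem:proj_dim} and in the proof of Theorem~\ref{thm:gen_wedd}. The only step that needs a brief remark is the reverse inclusion for $\rng{B}$, where one has to observe that $\pinv{(Y^*AX)}Y^*v=0$ already forces $(AX)\pinv{(Y^*AX)}Y^*v=0$.
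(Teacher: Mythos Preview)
Your proposal is correct and, for the most part, tracks the paper's own argument exactly: the identification of $P$ and $Q$ via Lemma~\ref{lem:proj_dim}, and the citation of Theorem~\ref{thm:gen_wedd} for $\nullsp{B}$, are precisely what the paper does (tacitly for the former, explicitly for the latter).

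The one genuine divergence is in how you obtain $\rng{B}$. The paper does not argue by two inclusions; instead it invokes Corollary~\ref{cor:red_of_As} to recognize $B^{*}$ as the reduction of $A^{*}$ by the pair $(Y,X)$, reads off $\nullsp{B^{*}}=\nullsp{A^{*}}\oplus\rng{Y\pinv{(X^{*}A^{*}Y)}}$ from Theorem~\ref{thm:gen_wedd} applied to the adjoint, and then takes the orthogonal complement. Your route is a direct verification: the forward inclusion via $\pinv{M}M\pinv{M}=\pinv{M}$, and the reverse by exhibiting a preimage. Both are short and valid. The paper's duality argument is more structural and reuses existing machinery without any new computation; your argument is self-contained and avoids the detour through $B^{*}$, at the cost of a small calculation. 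Either could replace the other without loss.
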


\begin{proof}
    The subspace $\nullsp{B}$ was calculated in Theorem~\ref{thm:gen_wedd}. By Corollary~\ref{cor:red_of_As}, 
    \[\nullsp{B^*}=\nullsp{A^*}\oplus\rng{Y\pinv{(X^*A^*Y)}}.\]
    Therefore
    \[\rng{B}=\brac*{\nullsp{A^*}\oplus\rng{Y\pinv{(X^*A^*Y)}}}^\perp=\rng{A}\cap\nullsp{\pinv{(Y^*AX)}Y^*}.\]
\end{proof}

\begin{corollary}
     Matrix $A$ and its generalized Wedderburn reduction $B$ are equal (as linear transformations) when restricted to 
    \[\rng{I-Q}=\nullsp{Q}=\nullsp{X^*A^*YY^*A}.\]
\end{corollary}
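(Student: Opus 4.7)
The plan is to read the corollary off directly from the preceding lemma, which already provides the two ingredients we need: the factorization $B=A(I-Q)$ and the identification of $\nullsp{Q}$ as $\nullsp{X^*A^*YY^*A}$. The key observation is that for an idempotent $Q$, one has the standard decomposition $\rng{I-Q}=\nullsp{Q}$ and $\rng{Q}=\nullsp{I-Q}$, so the subspace where the reduction is trivial is exactly $\nullsp{Q}$.

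Concretely, I would proceed as follows. First, invoke the previous lemma to write $B=A(I-Q)$ with $Q=X\pinv{(Y^*AX)}Y^*A$ a projection. Second, take any $v\in\rng{I-Q}$; since $I-Q$ is itself a projection with range equal to its fixed-point set, $(I-Q)v=v$, and therefore
\[Bv=A(I-Q)v=Av,\]
which is the desired equality of linear maps on this subspace. Third, justify the chain of equalities $\rng{I-Q}=\nullsp{Q}=\nullsp{X^*A^*YY^*A}$: the first equality is the general fact about complementary projections, and the second is precisely the nullspace description of $Q$ recorded in the preceding lemma (which in turn follows from Lemma~\ref{lem:proj_dim} applied with $X$ in the role of $A$ and $A^*Y$ in the role of $B$).

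There is essentially no obstacle here; the statement is a one-line consequence once the factorization $B=A(I-Q)$ is in hand. The only point worth highlighting is that one must use the factorization $B=A(I-Q)$ rather than the alternative $B=(I-P)A$, since only the former exhibits $\nullsp{Q}$ as a subspace annihilated by the correction term $AQ$. With the right factorization chosen, the proof reduces to the tautology $A(I-Q)v=Av$ for $v\in\nullsp{Q}$.
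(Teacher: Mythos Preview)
Your proposal is correct and matches the paper's intended argument: the corollary is stated in the paper without proof, precisely because it is meant to be read off from the preceding lemma exactly as you do --- use $B=A(I-Q)$, note that $(I-Q)v=v$ for $v\in\rng{I-Q}=\nullsp{Q}$, and invoke the lemma's identification $\nullsp{Q}=\nullsp{X^*A^*YY^*A}$. There is nothing to add.
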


\begin{lemma}[Ameli,Shadden~\cite{AmeliShadden}]
    Let $B=A-(AX)\pinv{(Y^*AX)}(Y^*A)$ be a rank reduction of matrix $A$. Then $B\pinv{A}B=\pinv{A}$, that is $B$ is a $\{2\}$-inverse of $\pinv{A}$. Moreover, $\pinv{A}B\pinv{A}$ is a reduction of $\pinv{A}$.
\end{lemma}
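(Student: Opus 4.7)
The plan is to exploit the factorization $B = (I-P)A$ from the preceding lemma, where $P = (AX)\pinv{(Y^*AX)}Y^*$ is a projection satisfying $P^2 = P$. The Penrose condition~(\ref{eq:Penrose_cond1}), $A\pinv{A}A = A$, will do most of the work, and the claim naturally splits into the $\{2\}$-inverse identity $B\pinv{A}B = B$ and the identification of $\pinv{A}B\pinv{A}$ as a reduction of $\pinv{A}$.

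For the $\{2\}$-inverse claim I would expand
\[
B\pinv{A}B = (I-P)A\pinv{A}(I-P)A = (I-P)A - (I-P)A\pinv{A}PA,
\]
using $A\pinv{A}A = A$ on the first term. The key observation is that $A\pinv{A}$ fixes $\rng{A}$, so in particular $A\pinv{A}(AX) = AX$, giving $A\pinv{A}PA = PA$. Combined with $P^2 = P$, the second term collapses to $(I-P)PA = 0$, and what remains is $(I-P)A = B$.

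For the statement that $\pinv{A}B\pinv{A}$ is itself a generalized Wedderburn reduction of $\pinv{A}$, I would first compute
\[
\pinv{A}B\pinv{A} = \pinv{A} - (\pinv{A}AX)\pinv{(Y^*AX)}(Y^*A\pinv{A}),
\]
using $\pinv{A}A\pinv{A} = \pinv{A}$. The only nontrivial step, and what I expect to be the main obstacle, is to guess the correct data for the reduction of $\pinv{A}$; I would try $\tilde X = AX$ and $\tilde Y = A^*Y$. Then $\pinv{A}\tilde X = \pinv{A}AX$, $\tilde Y^*\pinv{A} = Y^*A\pinv{A}$, and crucially
\[
\tilde Y^*\pinv{A}\tilde X = Y^*A\pinv{A}AX = Y^*AX
\]
by another use of $A\pinv{A}A = A$, so the pseudoinverse in the middle is unchanged. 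Hence the Wedderburn formula applied to $\pinv{A}$ with data $(\tilde X,\tilde Y)$ reproduces $\pinv{A}B\pinv{A}$ exactly, and Theorem~\ref{thm:gen_wedd} applies because $\rank(\tilde Y^*\pinv{A}\tilde X) = \rank(Y^*AX) = k$.
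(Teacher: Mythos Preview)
Your proof is correct and follows essentially the same route as the paper: the paper expands $B\pinv{A}B$ directly into four terms and collapses them using $A\pinv{A}A=A$ together with $\pinv{(Y^*AX)}(Y^*AX)\pinv{(Y^*AX)}=\pinv{(Y^*AX)}$, which is exactly the content of your $A\pinv{A}PA=PA$ and $P^2=P$ packaged through the factorization $B=(I-P)A$. For the second claim the paper likewise chooses $\tilde X=AX,\ \tilde Y=A^*Y$ and uses $Y^*A\pinv{A}AX=Y^*AX$, matching your argument verbatim; note also that you correctly read the intended identity as $B\pinv{A}B=B$ (the displayed ``$=\pinv{A}$'' in the statement is a typo, as the paper's own proof confirms).
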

\begin{proof}
Multiplying $\pinv{A}$ on the left and on the right by $B$ gives 
\[B\pinv{A}B=A-(AX)\pinv{(Y^*AX)}(Y^*A)\pinv{A}A-A\pinv{A}(AX)\pinv{(Y^*AX)}(Y^*A)+\]
\[+(AX)\pinv{(Y^*AX)}(Y^*A)\pinv{A}(AX)\pinv{(Y^*AX)}(Y^*A)=B.\]
Moreover
    \[\pinv{A}B\pinv{A}=\pinv{A}-(\pinv{A}AX)\pinv{(Y^*A\pinv{A}AX)}(Y^*A\pinv{A})=\]
    \[=\pinv{A}-(\pinv{A}(AX))\pinv{((Y^*A)\pinv{A}(AX))}((Y^*A)\pinv{A}).\]
    Therefore $\pinv{A}B\pinv{A}$ is a reduction of $\pinv{A}$ by matrices $AX$ and $A^*Y$.
\end{proof}

\begin{lemma}
For  $A\in\bK^{m\times n}$
    let $X,X'\in\bK^{n\times p}$ and $Y,Y'\in\bK^{m\times q}$ be matrices of the same size. Then the generalized Wedderburn reduction of $A$
    with respect to $X,Y$ is equal to the generalized Wedderburn reduction of $A$
    with respect to $X',Y'$ if there exist $W_X,W_Y$ such that
    \[X-X'=\brac*{I-\pinv{A}A}W_X,\]
    \[Y-Y'=\brac*{I-A\pinv{A}}W_Y,\]
    that is the range of the difference of $X$ and $X'$ (resp. $Y$ and $Y'$) is contained in the nullspace of $A$ (resp. $A^*$).
\end{lemma}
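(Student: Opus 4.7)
The plan is to show that under the stated hypotheses, each of the three matrix ``blocks'' appearing in the reduction formula $A - (AX)\pinv{(Y^*AX)}(Y^*A)$ is unchanged when $(X,Y)$ is replaced by $(X',Y')$. Concretely, I would prove the two identities $AX = AX'$ and $Y^*A = (Y')^*A$; these immediately imply $Y^*AX = (Y')^*AX'$, and hence every factor in the reduction formula coincides.

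For the first identity, the key observation is that $\pinv{A}A$ is a projection with $A\pinv{A}A = A$ by the Penrose condition~(\ref{eq:Penrose_cond1}). Therefore $A(I-\pinv{A}A) = 0$, and the hypothesis $X - X' = (I - \pinv{A}A)W_X$ gives
\[
A(X - X') = A(I-\pinv{A}A)W_X = 0,
\]
so $AX = AX'$. Geometrically, this just says that the range of $X - X'$ is in $\nullsp{A}$, so it cannot be detected by left-multiplication by $A$.

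For the second identity, the analogous argument uses that $A\pinv{A}$ is the orthogonal projection onto $\rng{A}$, so by~(\ref{eq:Penrose_cond3}) it is Hermitian: $(I - A\pinv{A})^* = I - A\pinv{A}$. Since $(I-A\pinv{A})A = A - A\pinv{A}A = 0$ by~(\ref{eq:Penrose_cond1}), we get
\[
(Y-Y')^*A = W_Y^*(I - A\pinv{A})^* A = W_Y^*(I - A\pinv{A})A = 0,
\]
so $Y^*A = (Y')^*A$.

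There is essentially no obstacle here; the lemma is a direct consequence of the defining identity $A\pinv{A}A = A$ together with the Hermitian property of $A\pinv{A}$. The only mild subtlety is remembering to conjugate correctly when transferring the second condition (which is phrased in terms of the nullspace of $A^*$, i.e., the orthogonal complement of $\rng A$) through the adjoint in $Y^*A$, which is handled cleanly by $(A\pinv{A})^* = A\pinv{A}$.
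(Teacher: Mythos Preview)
Your proof is correct and follows essentially the same route as the paper: you show $AX = AX'$ and $Y^*A = (Y')^*A$ directly from the Penrose condition $A\pinv{A}A = A$, and then observe that every factor in the reduction formula coincides. The paper's version is slightly terser (it writes the two vanishing differences in one line each, implicitly using $(A\pinv{A})^* = A\pinv{A}$), but the argument is identical.
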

\begin{proof}
Observe that $AX=AX'$ and $Y'^*A=Y^*A$ as
    \[AX-AX'=A\brac*{I-\pinv{A}A}W_X=0,\]
    \[Y^*A-Y'^*A=W_Y^*(I-A\pinv{A})A=0.\]
\end{proof}

\section{Wedderburn Decomposition}
\begin{lemma}\label{lem:wedd_decomp}
Let $A\in\bK^{n\times k}$ and $X\in\bK^{n\times p},\ Y\in\bK^{m\times q}$. Assume that $\rank Y^*AX=\rank A.$ Then
\begin{equation}\label{eq:full_red}
A=(AX)\pinv{(Y^*AX)}(Y^*A).    
\end{equation}

Moreover
\[\pinv{A}=\pinv{(Y^*A)}(Y^* AX)\pinv{(AX)}.\]
\end{lemma}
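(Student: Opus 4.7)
My plan is to treat the two claims in sequence. For the decomposition~(\ref{eq:full_red}) I would simply invoke Theorem~\ref{thm:gen_wedd} with $k=\rank A$, which gives
\[\rank\brac*{A-(AX)\pinv{(Y^*AX)}(Y^*A)}=\rank A-\rank(Y^*AX)=0\]
by the hypothesis $\rank(Y^*AX)=\rank A$, so the right--hand side of~(\ref{eq:full_red}) agrees with $A$.

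For the pseudoinverse identity I would verify the four Penrose conditions directly for $\Psi:=\pinv{(Y^*A)}(Y^*AX)\pinv{(AX)}$, using the factorization $A=(AX)\pinv{(Y^*AX)}(Y^*A)$ just established. Abbreviate $S=AX$, $T=Y^*A$, $M=Y^*AX$, so that $A=S\pinv{M}T$ and $M=Y^*S=TX$. The key preliminary observation is that $\rank M=\rank A$ forces $S,T,M,A$ to all have the same rank, and so the tautological inclusions $\rng{S}\subset\rng{A}$, $\rng{T^*}\subset\rng{A^*}$, $\rng{M}\subset\rng{T}$, $\rng{M^*}\subset\rng{S^*}$ are in fact equalities. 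Translated into the attached orthogonal projections, this reads
\[S\pinv{S}=A\pinv{A},\quad \pinv{T}T=\pinv{A}A,\quad M\pinv{M}=T\pinv{T},\quad \pinv{M}M=\pinv{S}S.\]

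With these four identities in hand the verification becomes mechanical. A short calculation using $M\pinv{M}M=M$, $T\pinv{T}=M\pinv{M}$, and $\pinv{M}M=\pinv{S}S$ collapses
\[A\Psi = S\pinv{M}T\cdot\pinv{T}M\pinv{S}\]
to $S\pinv{S}=A\pinv{A}$; symmetrically $\Psi A$ collapses to $\pinv{T}T=\pinv{A}A$. Thus $A\Psi$ and $\Psi A$ are orthogonal projections, which discharges the Hermitian Penrose conditions, and the remaining two, $A\Psi A=A$ and $\Psi A\Psi=\Psi$, follow from $A\pinv{A}A=A$ together with $\pinv{T}T\pinv{T}=\pinv{T}$. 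The only real bookkeeping is securing the four projection identities; once they are in place, no further insight is required.
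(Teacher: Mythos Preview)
Your argument is correct. The first claim is handled identically to the paper, by invoking Theorem~\ref{thm:gen_wedd}. For the second claim the two proofs diverge: the paper derives $\pinv{A}=\pinv{(Y^*A)}(Y^*AX)\pinv{(AX)}$ by applying the reverse order law twice to the factorization~(\ref{eq:full_red}), each time checking the hypothesis of Corollary~\ref{cor:Greville_cond} (equal ranks plus the nullspace condition $\nullsp{B^*}=\nullsp{A}$) via Lemma~\ref{lem:null_equal_ranks}. You instead verify the four Penrose conditions for $\Psi=\pinv{T}M\pinv{S}$ directly, after first recording the four projection identities $S\pinv{S}=A\pinv{A}$, $\pinv{T}T=\pinv{A}A$, $M\pinv{M}=T\pinv{T}$, $\pinv{M}M=\pinv{S}S$ that follow from the coincidence of ranges. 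Your route is self-contained and sidesteps the Greville machinery entirely; the paper's route, by contrast, explains \emph{why} the pseudoinverse splits---it is literally two instances of the reverse order law---and reuses the earlier corollary rather than redoing a Penrose check. Both arguments rest on the same underlying rank/range equalities, so the difference is one of packaging rather than depth.
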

\begin{proof}

The first claim follows directly from the generalized Wedderburn rank reduction, cf.~Theorem~\ref{thm:gen_wedd}, for $M=Y^*AX$ and
$k=\rank M=\rank A$. The second claim follows from two the reverse order law, by Corollary~\ref{cor:Greville_cond}, applied twice to the first claim, Eq.~(\ref{eq:full_red}). To see this note that
\[\rank (AX)\le \rank A,\quad \rank(Y^*A)\le \rank A,\quad \rank\pinv{\brac*{Y^*AX}}(Y^*A)\le \rank A\]
but by the assumption and the first claim~(\ref{eq:full_red}) neither of these inequalities may be strict, so
\[\rank(AX)=\rank \pinv{\brac*{Y^*AX}}=\rank\pinv{\brac*{Y^*AX}}(Y^*A)=\rank(Y^*A)=\rank A. \]

It suffices to show that

\[\nullsp{AX}=\nullsp{\brac*{\pinv{\brac*{Y^*AX}}(Y^*A)}^*},\quad\text{and}\quad
\nullsp{\pinv{\brac*{Y^*AX}}}=\nullsp{\brac*{Y^*A}^*}.\]
By several applications of Lemma~\ref{lem:null_equal_ranks} and the fact that $\nullsp{\pinv{M}}=\nullsp{M^*}$ for any matrix $M$
\[\nullsp{AX}=\nullsp{X},\]
\[\nullsp{\brac*{\pinv{\brac*{Y^*AX}}(Y^*A)}^*}=\nullsp{Y^*AX}=\nullsp{X},\]
and similarly
\[\nullsp{\pinv{\brac*{Y^*AX}}}=\nullsp{\brac*{Y^*A}^*}=\nullsp{Y}.\]
Therefore, by Corollary~\ref{cor:Greville_cond}
\[\pinv{A}=\pinv{\brac*{(AX)\pinv{(Y^*AX)}(Y^*A)}}=\pinv{(Y^*A)}\pinv{\brac*{\pinv{(Y^*AX)}(Y^*A)}}=\pinv{(Y^*A)}(Y^* AX)\pinv{(AX)}.\]
\end{proof}

Note the similarity to the~\cite[Theorem~5.5]{HammCU}.

\begin{corollary}
Let $P=(AX)\pinv{(Y^*AX)}Y^*$ and let $Q=X\pinv{(Y^*AX)}(Y^*A)$. Then $PAQ=PA=AQ=A$. Moreover $P,Q$ are oblique projections and
\[\rng{P}=\rng{A},\quad \nullsp{P}=\nullsp{Y^*}=\rng{Y},\]
\[\rng{Q}=\rng{X},\quad \nullsp{Q}=\nullsp{A}.\]
\end{corollary}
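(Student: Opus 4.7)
The overall strategy is to derive all four assertions from the Wedderburn decomposition established in Lemma~\ref{lem:wedd_decomp}, namely $A=(AX)\pinv{(Y^*AX)}(Y^*A)$, which holds under the standing hypothesis $\rank(Y^*AX)=\rank A$. Observe that left-multiplying $A$ by $P$ reproduces exactly the right-hand side of this decomposition, so $PA=A$; symmetrically $AQ=(AX)\pinv{(Y^*AX)}(Y^*A)=A$; and consequently $PAQ=(PA)Q=AQ=A$.

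Next, I would verify that $P$ and $Q$ are idempotents by direct computation. Writing $M=Y^*AX$, the expansion
\[P^2=(AX)\bracs*{\pinv{M}M\pinv{M}}Y^*\]
collapses the bracket to $\pinv{M}$ by the Penrose identity~\eqref{eq:Penrose_cond2}, yielding $P^2=P$; the argument for $Q^2=Q$ is identical in form. Both projections fail to be Hermitian in general, so they are oblique.

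Finally, I would read off the range and nullspace data. The factored form of $P$ gives $\rng P\subseteq\rng(AX)\subseteq\rng A$, while $PA=A$ supplies $\rng A\subseteq\rng(PA)\subseteq\rng P$, hence $\rng P=\rng A$. The inclusion $\nullsp{Y^*}\subseteq\nullsp P$ is immediate from the formula for $P$, and Lemma~\ref{lem:proj_dim} applied with the substitution $(A,B)\to(AX,Y)$ yields $\rank P=\rank(Y^*AX)=\rank A$, so a dimension count promotes this inclusion to equality. The assertions $\rng Q=\rng X$ and $\nullsp Q=\nullsp A$ follow by the dual computation applied to $Q$. The most delicate step is the dimension count closing $\nullsp{Y^*}\subseteq\nullsp P$ (and, dually, $\rng Q\subseteq\rng X$) to an equality: this requires $\rank Y=\rank A=\rank X$, which is not forced by $\rank(Y^*AX)=\rank A$ in isolation but is the natural setup in the CUR and generalized Nystrom applications referenced earlier, where $X$ and $Y$ are taken with full column rank equal to $\rank A$.
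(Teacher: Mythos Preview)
Your proposal is correct and follows essentially the same line as the paper's brief proof: verify $P^2=P$ and $Q^2=Q$ directly via the Penrose identity, then obtain the range and nullspace statements by the obvious inclusions combined with a rank count, using the consequences of Lemma~\ref{lem:wedd_decomp}. You are in fact more careful than the paper on one point: the equalities $\nullsp{P}=\nullsp{Y^*}$ and $\rng{Q}=\rng{X}$ do require $\rank Y=\rank X=\rank A$, an extra hypothesis that the paper's proof (``as in the proof of Lemma~\ref{lem:wedd_decomp}'' together with ``equal ranks'') uses implicitly but never states, and which you correctly flag as not being forced by $\rank(Y^*AX)=\rank A$ alone.
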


\begin{proof}
\[P^2=(AX)\pinv{(Y^*AX)}(Y^*AX)\pinv{(Y^*AX)}Y^*=AX\pinv{(Y^*AX)}Y^*=P,\]   
in a similar way $Q^2=Q$. The nullspaces can be calculated as in the proof of Lemma~\ref{lem:wedd_decomp}. For column spaces, inclusions follow from the definitions and from equal ranks.
\end{proof}

\begin{corollary}
    Any matrix can be presented as a product of itself and two projections, 
    \[A=PAQ,\]
    if $\rank (Y^*AX)=\rank A$ and 
    \[P=(AX)\pinv{(Y^*AX)}Y^*,\quad Q=X\pinv{(Y^*AX)}(Y^*A).\]
    
\end{corollary}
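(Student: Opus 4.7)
The plan is to assemble the factorization $A = PAQ$ directly from Lemma~\ref{lem:wedd_decomp} together with the preceding corollary, which already did the hard work. The rank hypothesis $\rank(Y^*AX) = \rank A$ is exactly the assumption of the Wedderburn decomposition, so I would first invoke Eq.~(\ref{eq:full_red}) to obtain the key identity
\[
A = (AX)\pinv{(Y^*AX)}(Y^*A).
\]

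Next I would verify $PA = A$ by sliding $A$ past $Y^*$ and applying the identity above:
\[
PA = (AX)\pinv{(Y^*AX)}Y^*A = (AX)\pinv{(Y^*AX)}(Y^*A) = A,
\]
and by a symmetric calculation $AQ = A$. Combining these, $PAQ = (PA)Q = AQ = A$, which is the desired factorization.

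It remains to verify that $P$ and $Q$ are projections, but this was already checked in the previous corollary via the Penrose identity $\pinv{M}M\pinv{M} = \pinv{M}$ with $M = Y^*AX$: expanding $P^2$ produces an interior factor $\pinv{(Y^*AX)}(Y^*AX)\pinv{(Y^*AX)}$ that collapses to $\pinv{(Y^*AX)}$, and likewise for $Q^2$. Since everything needed is already in place, there is no real obstacle; the statement is essentially a repackaging of the previous corollary emphasizing the factorization viewpoint, so the proof reduces to citing Lemma~\ref{lem:wedd_decomp} and the preceding corollary.
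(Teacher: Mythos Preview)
Your proposal is correct and matches the paper's approach: the corollary is stated without its own proof precisely because everything follows from Lemma~\ref{lem:wedd_decomp} and the immediately preceding corollary, which already records $PAQ=PA=AQ=A$ and verifies $P^2=P$, $Q^2=Q$ via the Penrose identity. Your write-up simply makes those citations explicit.
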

The Corollary is meta--factorization introduced by Karpowicz, see~\cite[Theorem~4]{karpowicz2022theory}. Moreover, any meta--factorization can be tautologically presented this way.

\begin{lemma}
    If $A=PAQ$ where $P\in\bK^{m\times m},Q\in\bK^{n\times n}$ are matrices of projections then there exist matrices $X,Y$ such that
     \[P=(AX)\pinv{(Y^*AX)}Y^*,\quad Q=X\pinv{(Y^*AX)}(Y^*A),\]
     and $\rank Y^*AX=\rank A$.
\end{lemma}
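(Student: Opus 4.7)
The plan is to take $X=Q$ and $Y=P^*$ directly and verify the two formulas. First I would derive from $A=PAQ$ combined with idempotency that $PA=A$ and $AQ=A$: indeed $PA=P(PAQ)=P^2AQ=PAQ=A$, and $AQ=(PAQ)Q=PAQ^2=PAQ=A$. With this choice of $X$ and $Y$, one has $Y^*AX=PAQ=A$, so the rank condition $\rank Y^*AX=\rank A$ holds immediately.

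Next I would verify each formula. For $P$, the formula gives
\[(AX)\pinv{(Y^*AX)}Y^*=(AQ)\pinv{A}P=A\pinv{A}P,\]
and since $A\pinv{A}$ is the orthogonal projection onto $\rng{A}$, and $\rng{A}=\rng{P}$ (see below), this fixes every vector in $\rng{P}$, so $A\pinv{A}P=P$. For $Q$, the formula gives
\[X\pinv{(Y^*AX)}(Y^*A)=Q\pinv{A}(PA)=Q\pinv{A}A;\]
decomposing any $v=v_1+v_2$ with $v_1\in\rng{Q}$ and $v_2\in\nullsp{Q}=\nullsp{A}$, I would note that $\pinv{A}Av=\pinv{A}Av_1$, which equals $v_1$ modulo $\nullsp{A}=\nullsp{Q}$ (because $\pinv{A}A$ is the orthogonal projection onto $\nullsp{A}^\perp$), and applying $Q$ kills the error term, yielding $Q\pinv{A}Av=Qv_1=v_1=Qv$.

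The main obstacle is that the lemma as written is not quite right under the sole hypothesis $A=PAQ$: the preceding corollary forces $\rng{P}=\rng{A}$ and $\nullsp{Q}=\nullsp{A}$ for any representation of the required form (the projection produced by the formula has rank $\rank A$ with range $\rng{AX}\subseteq\rng{A}$, forcing equality, and similarly for the kernel). So the statement tacitly presumes these two equalities, which are precisely the conditions appearing in Karpowicz's meta-factorization. A simple counterexample, $P=I$ together with $Q=A$ for a non-identity orthogonal projection $A$, shows that $A=PAQ$ by itself does not suffice. Once one includes $\rng{P}=\rng{A}$ and $\nullsp{Q}=\nullsp{A}$ as part of the hypothesis, the verification sketched above goes through with no further complication.
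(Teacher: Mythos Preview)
Your approach is exactly the paper's: set $X=Q$, $Y=P^*$, so that $Y^*AX=PAQ=A$, and then reduce the two identities to $A\pinv{A}P=P$ and $Q\pinv{A}A=Q$. The paper's verification of the second identity is the transpose version of yours: it rewrites $Q\pinv{A}A=Q$ as $\pinv{A}A\,Q^*=Q^*$, which holds because $\rng{Q^*}=\rng{A^*}$.

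You are also right that the lemma, as stated, is missing hypotheses. The paper's proof simply asserts ``In addition $\rank P=\rank Q=\rank A$'' and ``$\rng{P}=\rng{A}$, $\rng{Q^*}=\rng{A^*}$'' as though these followed from $A=PAQ$; they do not, and your counterexample ($P=I$, $Q=A$ for a nontrivial orthogonal projection $A$) demonstrates it. The intended context is the sentence immediately preceding the lemma, which says that any \emph{meta-factorization} can be presented this way; Karpowicz's meta-factorization carries precisely the conditions $\rng{P}=\rng{A}$ and $\nullsp{Q}=\nullsp{A}$ that you flag. So the paper tacitly imports those conditions without writing them into the hypothesis, and your critique is on point. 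Once they are assumed, your argument and the paper's are the same.
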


\begin{proof}
   If $A=PAQ$  then by left multiplication by $P$ and by right multiplication by $Q$ 
   \[AQ=PAQ=PA=A.\]
   In addition
   \[\rank P=\rank Q=\rank A,\]
   as $PA=A$ and $A^*=Q^*A^*$
   \[\rng{P}=\rng{A},\quad \rng{Q^*}=\rng{A^*}.\]
   Set $X=Q$ and $Y=P^*$. Then by Lemma~\ref{lem:proj} it follows that
   \[(AX)\pinv{\brac*{Y^*AX}}Y^*=(AQ)\pinv{\brac*{PAQ}}P=A\pinv{A}P=P,\]
   as $A\pinv{A}$ is a matrix of the orthogonal projection onto $\rng{A}=\rng{P}$. Similarly,
   \[X\pinv{\brac*{Y^*AX}}Y^*A=Q\pinv{\brac*{PAQ}}PA=Q\pinv{A}A=Q,\]
   as $\pinv{A}A$ is a matrix of the orthogonal projection onto $\rng{A^*}=\rng{Q^*}$ and
   the last equality is equivalent to $Q^*=\pinv{A}AQ^*$.
   
\end{proof}

\section{Best Rank Reduction via Generalized Wedderburn Rank Reduction}
The following lemma explains reduction of $A$ in terms of its SVD decomposition, for a particular choice of matrices $X$ and $Y$.
\begin{lemma}\label{lem:SVD}
    Let $A=U\Sigma V^*$ be a SVD decomposition of the matrix $A$. Let $r=\rank A$ and $I\subset\{1,\ldots,r\}$ be a non--empty subset.
    Let $X=V_I,Y=U_I$. Then
    \[AX\pinv{\brac*{Y^*AX}}Y^*A=\sum_{i\in I}\sigma_i u_i v_i^*.\]
    In particular, generalized Wedderburn reduction by such chosen matrices gives
    \[A-AX\pinv{\brac*{Y^*AX}}Y^*A=\sum_{i\in [r]\setminus I}\sigma_i u_i v_i^*.\]
\end{lemma}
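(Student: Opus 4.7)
The plan is to reduce everything to a straightforward block computation using the orthonormality of the singular vectors. First, I would exploit the fact that $V$ is unitary to simplify $AX = U\Sigma V^* V_I$. Since the columns of $V_I$ are a subset of the columns of $V$, the product $V^* V_I$ is a selection matrix whose $\ell$-th column is $e_{i_\ell}$ (where $I = \{i_1,\dots,i_k\}$). Multiplying on the left by $\Sigma$ (diagonal) scales column $\ell$ by $\sigma_{i_\ell}$, and then $U$ picks out the corresponding left singular vector. The upshot is $AX = U_I \Sigma_I$ where $\Sigma_I = \diag(\sigma_i : i\in I)$. By the transposed version of the same argument, $Y^* A = \Sigma_I V_I^*$.

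Next, I would compute $Y^*AX = U_I^* A V_I = \Sigma_I V_I^* V_I = \Sigma_I$, where the last equality uses $V_I^* V_I = I_k$. Because $i\in\{1,\ldots,r\}$, every $\sigma_i$ here is strictly positive, so $\Sigma_I$ is a square invertible diagonal matrix and its Moore--Penrose pseudoinverse coincides with its ordinary inverse $\Sigma_I^{-1}$.

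Plugging these three identities into $AX\pinv{\brac*{Y^*AX}}Y^*A$ then gives
\[AX\pinv{\brac*{Y^*AX}}Y^*A \;=\; (U_I \Sigma_I)\,\Sigma_I^{-1}\,(\Sigma_I V_I^*) \;=\; U_I \Sigma_I V_I^* \;=\; \sum_{i\in I}\sigma_i u_i v_i^*,\]
which is the first claim. The second claim follows immediately by subtracting from $A = U\Sigma V^* = \sum_{i=1}^{r}\sigma_i u_i v_i^*$.

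There is no real obstacle here, just notational care: the only thing one has to track is that $V^*V_I$ and $U_I^*U$ act as column/row selection matrices precisely because $U$ and $V$ are unitary. Once those selections are carried out the inner matrix $Y^*AX$ collapses to a diagonal $\Sigma_I$ with no zero entries (since $I\subset\{1,\ldots,r\}$), which is exactly what makes the pseudoinverse behave as an ordinary inverse and the whole expression telescope.
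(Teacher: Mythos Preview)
Your proof is correct and follows essentially the same direct computation as the paper's own proof. The only difference is cosmetic: the paper writes the intermediate products as explicit sums in the standard basis (e.g., $Y^*AX=\sum_{j\in[k]}\sigma_{i_j}e_je_j^*$), whereas you package the same information in block form via the selection identities $AX=U_I\Sigma_I$, $Y^*A=\Sigma_I V_I^*$, $Y^*AX=\Sigma_I$.
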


\begin{proof}
Let $I=\{i_1,\ldots,i_k\}$. Then
\[Y^*AX=U_I^*AV_I=\brac*{\sum_{j\in [k]} e_{j}u_{i_j}^*}\brac*{\sum_{i\in [r]}\sigma_i u_i v_i^*}\brac*{\sum_{j\in [k]}v_{i_j} e_{j}^*}=\sum_{j\in [k]}\sigma_{i_j}e_j e_j^*.\]
In a similar fashion
\[\pinv{\brac*{Y^*AX}}=\sum_{i\in [k]}\sigma_{i_j}^{-1} e_j e_j^*,\]
\[AX\pinv{\brac*{Y^*AX}}Y^*X=
\brac*{\sum_{j\in [k]}\sigma_{i_j} u_{i_j} e_j^*}
\brac*{\sum_{i\in [k]}\sigma_{i_j}^{-1} e_j e_j^*} 
\brac*{\sum_{j\in [k]}\sigma_{i_j} e_j v_{i_j}^*}=\sum_{i\in [k]}\sigma_{i_j} u_{i_j} v_{i_j}^*=\sum_{i\in I}\sigma_i u_i v_i^*.\]
\end{proof}

\begin{corollary}\label{cor:red_by_SVD_part}
    Let $A=U\Sigma V^*$ be the SVD decomposition of matrix $A$. Let $r=\rank A$. Let $X\in\bK^{m\times p},\ Y\in\bK^{n\times q}$ be matrices such that $\rng{X}=\rng{V_I}$ and 
    $\rng{Y}=\rng{U_I}$. Then the generalized Wedderburn reduction of $A$ with respect to $X,Y$ is equal to
    \[A-AX\pinv{\brac*{Y^*AX}}Y^*A=\sum_{i\in [r]\setminus I}\sigma_i u_i v_i^*.\]
\end{corollary}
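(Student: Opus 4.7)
The plan is to reduce to Lemma~\ref{lem:SVD}, which handles the case $X=V_I,\ Y=U_I$, by a direct computation showing that $AX\pinv{(Y^*AX)}Y^*A$ depends only on $\rng{X}$ and $\rng{Y}$ and in our setting equals $\sum_{i\in I}\sigma_i u_iv_i^*$.

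First I would factor $X=V_I T_1$ with $T_1=V_I^*X$, and $Y=U_I T_2$ with $T_2=U_I^*Y$; the hypothesis $\rng{X}=\rng{V_I}$, $\rng{Y}=\rng{U_I}$ forces both $T_1$ and $T_2$ to have full row rank $|I|$. The SVD identities $AV_I=U_I\Sigma_I$ and $U_I^*A=\Sigma_I V_I^*$ (with $\Sigma_I=\diag(\sigma_i)_{i\in I}$ invertible) then give $AX=U_I\Sigma_I T_1$, $Y^*A=T_2^*\Sigma_I V_I^*$, and $Y^*AX=T_2^*\Sigma_I T_1$.

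Next I would apply the reverse order law of Corollary~\ref{cor:Greville_cond} twice to obtain
\[\pinv{(T_2^*\Sigma_I T_1)}=\pinv{T_1}\Sigma_I^{-1}\pinv{(T_2^*)}.\]
All factors and subproducts have rank $|I|$, so the equal-rank hypothesis is satisfied, and the required null-space conditions reduce to $\{0\}=\{0\}$ since $\Sigma_I T_1$ has trivial cokernel and $T_2^*$ has trivial kernel (by full row/column rank together with invertibility of $\Sigma_I$). Substituting this into $AX\pinv{(Y^*AX)}Y^*A$ and collapsing the products $T_1\pinv{T_1}=I$ and $\pinv{(T_2^*)}T_2^*=I$ telescopes everything to $U_I\Sigma_I V_I^*=\sum_{i\in I}\sigma_i u_iv_i^*$; subtracting from $A=\sum_{i=1}^r \sigma_iu_iv_i^*$ yields the desired identity.

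The main obstacle is just the bookkeeping of the two applications of the reverse order law---checking that the rank and null-space hypotheses of Corollary~\ref{cor:Greville_cond} hold---but in this rectangular-but-full-rank setup they are immediate. An alternative route would be to prove directly that the reduction is invariant under any change of $X,Y$ that preserves $\rng{X}$ and $\rng{Y}$ and then invoke Lemma~\ref{lem:SVD}, but this abstract invariance is essentially proved by the same telescoping computation, so the direct approach above seems cleanest.
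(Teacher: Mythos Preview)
Your proof is correct. The factorization $X=V_I T_1$, $Y=U_I T_2$ with $T_1,T_2$ of full row rank, the identities $AV_I=U_I\Sigma_I$, $U_I^*A=\Sigma_I V_I^*$, and the two applications of Corollary~\ref{cor:Greville_cond} all go through exactly as you describe, and the telescoping $T_1\pinv{T_1}=I_{|I|}=\pinv{(T_2^*)}T_2^*$ finishes the job.

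The paper takes a slightly different route after the same initial factorization. Instead of splitting $\pinv{(T_2^*\Sigma_I T_1)}$ via the reverse order law, it sets $B=U_I^*AV_I=\Sigma_I$ and invokes the Wedderburn decomposition (Lemma~\ref{lem:wedd_decomp}) on the compressed matrix: since $M$ has full row rank and $N^*$ full column rank, $\rank(N^*BM)=\rank B$, so $B=BM\pinv{(N^*BM)}N^*B$; then $AX\pinv{(Y^*AX)}Y^*A=U_I\bigl(BM\pinv{(N^*BM)}N^*B\bigr)V_I^*=U_IBV_I^*$. Your approach is more hands-on, actually computing $\pinv{(Y^*AX)}$, and relies only on Corollary~\ref{cor:Greville_cond}; the paper's approach is a shade more structural, recycling the full-rank decomposition result rather than redoing the reverse-order-law bookkeeping. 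Either way the core observation is the same: the $M,N$ (your $T_1,T_2$) factors cancel because they have one-sided inverses.
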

\begin{proof}
By the assumption, there exist full row rank matrices $M\in\bK^{\abs{I}\times p}$ and $N\in\bK^{\abs{I}\times q}$
such that
\[X=V_IM,\quad Y=U_IN.\]
Since $V_I^*$ is an orthogonal projection onto $\rng{V_I}$
\[U_I^*A=U_I^*AV_IV_I^*,\]
both sides are zero on $v_i$ when $i\notin I$ and identical otherwise.
Similarly
\[U_IU_I^*AV_I=AV_I.\]
Let $B=U_I^*AV_I=\sum_{j=1}^k \sigma_{i_j}e_j e_j^*$. Since $M$ is of full row rank and $N^*$ is of full column rank, $\rank (N^*BM)=\rank B$ and by Lemma~\ref{lem:wedd_decomp}
\[B=BM\pinv{\brac*{N^*BM}}N^*B.\]

Then
\[AX\pinv{\brac*{Y^*AX}}Y^*A=AV_IM\pinv{\brac*{N^*U_I^*AV_IM}}N^*U_I^*A=\]
\[=U_IU_I^*AV_IM\pinv{\brac*{N^*U_I^*AV_IM}}N^*U_I^*AV_IV_I^*=U_IBM\pinv{\brac*{N^*BM}}N^*BV_I^*=\]
\[=U_IBV_I^*=\sum_{i\in I}\sigma_i u_iv_i^*.\]
\end{proof}

\begin{remark}\label{rem:cancel2}
    In general, it is not true that the generalized Wedderburn reduction of fixed matrix $A$ by matrices $X$ and $Y$ depends only on $\rng{X}$ and $\rng{Y}$.
    On the other hand, when  $\rank AX=\rank A$ (for example if $X$ is of full row rank) and $Y^*$ is of full column rank 
    \[AX\pinv{(Y^*AX)}=A\pinv{(Y^*A)}.\]
    To see this, note 
    \[Y^*AX\pinv{(Y^*AX)}=Y^*A\pinv{(Y^*A)},\]
    as both sides are orthogonal projections on $\rng{Y^*A}=\rng{Y^*AX}$ and multiply the above equation on the left by $\pinv{\brac*{Y^*}}$. \\
    Similarly, if $\rank Y^*A=\rank A$ (for example $Y^*$ is of full column rank) and $X$ if of full row rank
    \[\pinv{(Y^*AX)}Y^*A=\pinv{(AX)}A.\]
    To see this, note 
    \[\pinv{(Y^*AX)}Y^*AX=\pinv{(AX)}AX,\]
    as both sides are orthogonal projections on $\rng{(AX)^*}=\rng{(Y^*AX)^*}$ and multiply the above equation on the left by $\pinv{\brac*{X}}$. \\
\end{remark}

\begin{example}
    The following example shows that the assumption that $Y$ is of full column rank is necessary.

\[A=\left[\begin{matrix}1 & 2 & 1\\2 & 3 & 2\\1 & 1 & 2\end{matrix}\right],\quad
Y=\left[\begin{array}{ccc} 1 & 4 & 1\\ 2 & 5 & 1\\ 3 & 6 & 1 \end{array}\right].
\]
Let $X=A$. Then $\rank AX=\rank A=3$, but $\rank Y=2$, so $Y$ is not of full rank and $AX\pinv{(Y^*AX)}Y^*A\neq A\pinv{(Y^*A)Y^*A}$. This also shows that the reduction with fixed $Y$ does not depend solely on the image of $\rng{X}$ (since $\rng{X}=\rng{I}$). By symmetry (by conjugation), the reduction with fixed $X$ does not depend solely on the image of $\rng{Y}$.
    
\end{example}
\begin{lstlisting}
A = [1 2 1; 2 3 2; 1 1 2]; 
Y = [1 2 3; 4 5 6; 1 1 1]'; % rank 2
X=A; rank(A*X)-rank(X)
A*X*pinv(Y'*A*X) - A*pinv(Y'*A) % not zero
\end{lstlisting}
\begin{example}
    The following example shows that the reduction does not depend on either $\rng{X}$ and $\rng{Y}$ or on only $\rng{X}$ and $\rng{Y^*}$.
\end{example}
\begin{lstlisting}
A = rand(3); X = rand(3); G=rand(3); % all invertible
Y = [1 2 3; 4 5 6; 1 1 1]; % rank 2 then the following are non-zero
norm(A*X*G*pinv(Y'*A*X*G)*Y'*A - A*X*pinv(Y'*A*X)*Y'*A) 
norm(A*G*X*pinv(Y'*A*G*X)*Y'*A - A*X*pinv(Y'*A*X)*Y'*A) 
norm(A*X*pinv(Y'*G*A*X)*Y'*G*A - A*X*pinv(Y'*A*X)*Y'*A) 

A = rand(3); Y = rand(3); G=rand(3); % all invertible
X = [1 2 3; 4 5 6; 1 1 1]; % rank 2
norm(A*X*pinv(G'*Y'*A*X)*G'*Y'*A - A*X*pinv(Y'*A*X)*Y'*A) % non-zero

\end{lstlisting}

\section{Properties Inherited by Generalized Wedderburn Rank Reduction}
In this section, (non)uniqueness of the presentation of a projection is discussed. Then, the properties shared by the matrix and its generalized Wedderburn rank reduction are listed (Remark~\ref{rem:inherited}). Finally, for two commuting projections, their meet and join are expressed in the form 

\begin{lemma}
Let $A\in\bK^{m\times p},B\in\bK^{m\times q}$ and $C\in\bK^{m\times s},D\in\bK^{m\times t}$. 
Let $S=AA^*BB^*,\ T=CC^*DD^*$.
Then
    \[A\pinv{(B^*A)}B^*=C\pinv{(D^*C)}D^*\]
    if and only if
    \[\rng{S}=\rng{T},\ \nullsp{S}=\nullsp{T},\quad\text{or equivalently}\quad
    \rng{S}=\rng{T},\quad \rng{S^*}=\rng{T^*},\]
    \[\text{or}\quad S\pinv{S}=T\pinv{T},\quad \pinv{S}S=\pinv{T}T.\]
\end{lemma}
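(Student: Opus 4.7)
The plan is to identify both sides as projections determined uniquely by their range and nullspace, and then to rewrite those subspaces in terms of $S$ and $T$ using the preliminary lemmas.

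First, I would invoke Lemma~\ref{lem:proj_dim} to conclude that $P_1 := A\pinv{(B^*A)}B^*$ is a projection with $\rng{P_1}=\rng{AA^*B}$ and $\nullsp{P_1}=\nullsp{A^*BB^*}$, and likewise $P_2:=C\pinv{(D^*C)}D^*$ is a projection with $\rng{P_2}=\rng{CC^*D}$ and $\nullsp{P_2}=\nullsp{C^*DD^*}$. Two applications of Lemma~\ref{lem:cancel_in_rowsp_and_colsp} then show that $\rng{AA^*B}=\rng{AA^*BB^*}=\rng{S}$ (cancelling $BB^*$ on the right) and $\nullsp{A^*BB^*}=\nullsp{AA^*BB^*}=\nullsp{S}$ (cancelling the extra $A$ using the nullspace form of the same lemma). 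The analogous identities give $\rng{P_2}=\rng{T}$ and $\nullsp{P_2}=\nullsp{T}$.

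Second, since a projection $P$ is uniquely determined by the direct sum decomposition $\bK^m=\rng{P}\oplus\nullsp{P}$, the equality $P_1=P_2$ is equivalent to $\rng{S}=\rng{T}$ together with $\nullsp{S}=\nullsp{T}$, which is the first reformulation.

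Third, I would derive the remaining equivalent conditions. The orthogonal-complement identity $\nullsp{S}^\perp=\rng{S^*}$ turns $\nullsp{S}=\nullsp{T}$ into $\rng{S^*}=\rng{T^*}$, giving the second reformulation. For the pseudoinverse reformulation, I would use the standard fact that $S\pinv{S}$ is the orthogonal projection onto $\rng{S}$ and $\pinv{S}S$ is the orthogonal projection onto $\rng{S^*}$; since an orthogonal projection is uniquely determined by its range, the conditions $S\pinv{S}=T\pinv{T}$ and $\pinv{S}S=\pinv{T}T$ are equivalent to $\rng{S}=\rng{T}$ and $\rng{S^*}=\rng{T^*}$ respectively, matching the preceding two conditions.

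There is no real obstacle here: the entire argument reduces to quoting Lemma~\ref{lem:proj_dim}, then bookkeeping with Lemma~\ref{lem:cancel_in_rowsp_and_colsp} and the orthogonal-complement correspondence. The only mildly delicate point is keeping track of which cancellation of Lemma~\ref{lem:cancel_in_rowsp_and_colsp} is used on each side (the range identity to add a factor $BB^*$ on the right, the nullspace identity to add a factor $A$ on the left), so that both $\rng{AA^*B}$ and $\nullsp{A^*BB^*}$ are brought to the common form $\rng{S}$ and $\nullsp{S}$.
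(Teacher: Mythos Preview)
Your proposal is correct and follows essentially the same route as the paper: identify the two expressions as projections via Lemma~\ref{lem:proj_dim}, use that a projection is determined by its range and nullspace, and then rewrite $\rng{AA^*B}$ and $\nullsp{A^*BB^*}$ in terms of $S=AA^*BB^*$. The only cosmetic difference is that the paper passes directly from the nullspace equality to $\rng{S^*}=\rng{T^*}$ via one orthogonal complement, whereas you first add the factor $A$ to obtain $\nullsp{S}$ and then take complements; your version also spells out the pseudoinverse reformulation $S\pinv{S}=T\pinv{T}$, $\pinv{S}S=\pinv{T}T$, which the paper's proof leaves implicit.
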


\begin{proof}
    Two projections are equal if and only if they have the same range and null space, that is, by Lemma~\ref{lem:proj}  
    \[\rng{AA^*B}=\rng{CC^*D},\quad\nullsp{A^*BB^*}=\nullsp{C^*DD^*},\]
    or, since $\rng{B}=\rng{BB^*},\rng{D}=\rng{DD^*}$, by taking orthogonal complements
    \[\rng{AA^*BB^*}=\rng{CC^*DD^*},\quad\rng{BB^*AA^*}=\rng{DD^*CC^*},\]
    which finishes the proof.
\end{proof}

\begin{remark}
    Although any projection can be expressed in the form $A\pinv{(B^*A)}B^*$ (Corollary~\ref{cor:prescribed_proj}, this presentation is highly non-unique as when $C=AU$ and $D=BV$ for some orthogonal matrices $U,V$ then, by Lemma~\ref{lem:proj}, (the images and kernels are equal)
    \[A\pinv{(B^*A)}B^*=C\pinv{(D^*C)}D^*.\]
\end{remark}

\begin{remark}\label{rem:inherited}
    \begin{enumerate}[i)]
        \item generalized Wedderburn rank reduction of a Hermitian positive semidefinite matrix by $X=Y$ is
        Hermitian  positive semidefinite.
        Let $A^{\frac{1}{2}}$ be a Hermitian square root of $A$ and let $B=A^{\frac{1}{2}}X$. Then
        \[A-AX\pinv{(X^*AX)}(X^*A)=A^{\frac{1}{2}}\brac*{I-A^{\frac{1}{2}}X\pinv{(X^*A^{\frac{1}{2}}A^{\frac{1}{2}}X)}X^*A^{\frac{1}{2}}}A^{\frac{1}{2}}=\]
        \[=A^{\frac{1}{2}}\brac*{I-B\pinv{(B^* B)}B^*}A^{\frac{1}{2}}=
        A^{\frac{1}{2}}\brac*{I-B\pinv{B}}A^{\frac{1}{2}}.\]
        \item generalized Wedderburn rank reduction of a skew--Hermitian  matrix by $X=Y$ is
        a skew--Hermitian matrix.
        \[\brac*{A-AX\pinv{(X^*AX)}(X^*A)}^*=A^*-A^*X\pinv{(X^*A^*X)}(X^*A^*)=-\brac*{A-AX\pinv{(X^*AX)}(X^*A)}.\]
        
        \item generalized Wedderburn rank reduction of a normal matrix by $X=Y$ 
        in general does not need to be a normal matrix.
        Let
        \[A= \frac{1}{\sqrt{6}}\left[\begin{array}{rrr} \sqrt{3} & 0 & \sqrt{3}\\ -\sqrt{2} & \sqrt{2} & \sqrt{2}\\ 1 & 2 & -1 \end{array}\right],\quad X=\left[\begin{array}{c} 1\\ 0\\ 1 \end{array}\right].\]
        Then
        \[B=A-AX\pinv{(X^*AX)}(X^*A)=\frac{1}{\sqrt{6}}\left[\begin{array}{rrr} -1 & -2 & 1\\ -\sqrt{2} & \sqrt{2} & \sqrt{2}\\ 1 & 2 & -1 \end{array}\right],\quad\text{and}\quad [B,B^*]\neq 0.\]
        This example also shows that generalized Wedderburn rank reduction of a nondiagonalizable matrix by $X=Y$ generally does not need to be a non--diagonalizable matrix.
        \item Similarly, generalized Wedderburn rank reduction of a real diagonalizable over $\bR$ matrix  by $X=Y$ in general does not need to be a diagonalizable over $\bR$ matrix.
        \item the generalized Wedderburn rank reduction of a nilpotent matrix by $X=Y$ does not need to be nilpotent, take
        \[A=\left[\begin{array}{rrrr} 0 & 1 & 0 & 0\\ 0 & 0 & 1 & 0\\ 0 & 0 & 0 & 1\\ 0 & 0 & 0 & 0 \end{array}\right],\quad X=\left[\begin{array}{c} 1\\ 1\\ 1\\ 0 \end{array}\right].\]
        Then
        \[A-AX\pinv{(X^*AX)}(X^*A)=\frac{1}{2}\left[\begin{array}{rrrr} 0 & 1 & -1 & -1\\ 0 & -1 & 1 & -1\\ 0 & 0 & 0 & 2\\ 0 & 0 & 0 & 0 \end{array}\right],
\]
is not nilpotent.
    \item a version of Sherman--Morrison--Woodbury formula for pseudoinverse was proven by Deng in~\cite{Deng}, however, it seems unlikely that a concise formula for a reduction of a reduction exists.
    \end{enumerate}
\end{remark}

\begin{lemma}
    Let $P,Q$ be two commuting projections, that is, $PQ=QP$. Then $P\wedge Q:=PQ$ and $P\vee Q:=P+Q-PQ$ are projections such that
    \[\rng{P+Q-PQ}=\rng{P}+\rng{Q},\quad\nullsp{P+Q-PQ}=\nullsp{P}\cap \nullsp{Q},\]
    \[\rng{PQ}=\rng{P}\cap \rng{Q},\quad\nullsp{PQ}=\nullsp{P}+\nullsp{Q}.\]
\end{lemma}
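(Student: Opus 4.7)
My plan is to verify the idempotence of the two candidate operators directly using the commutation relation, and then handle the four range/nullspace identities by short inclusion arguments; commutativity makes every step essentially a one-line computation, so the proof will be mostly bookkeeping.

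First I would check that $PQ$ and $P+Q-PQ$ are projections. For $PQ$ this is $(PQ)^2=P(QP)Q=P^2Q^2=PQ$. For $P+Q-PQ$ I would expand $(P+Q-PQ)^2$ and repeatedly use $PQ=QP$, $P^2=P$, $Q^2=Q$ to collapse the nine resulting terms to $P+Q-PQ$. This step is completely mechanical.

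Next I would handle the meet. The inclusion $\rng{PQ}\subseteq\rng{P}\cap\rng{Q}$ is immediate from $PQ=QP$, and the reverse follows because any $v\in\rng{P}\cap\rng{Q}$ satisfies $Pv=v$ and $Qv=v$, hence $PQv=v$. For the nullspace, $\nullsp{P}\subseteq\nullsp{QP}=\nullsp{PQ}$ and $\nullsp{Q}\subseteq\nullsp{PQ}$ give one direction; for the other, given $PQv=0$ I would use the decomposition $v=(v-Qv)+Qv$, noting that $v-Qv\in\nullsp{Q}$ and $Qv\in\nullsp{P}$ (since $P(Qv)=PQv=0$), which places $v$ in $\nullsp{P}+\nullsp{Q}$.

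For the join $P+Q-PQ$, the identity $(P+Q-PQ)u=Pu+Q(I-P)u$ shows $\rng{P+Q-PQ}\subseteq\rng{P}+\rng{Q}$. For the reverse, I would check that every $v\in\rng{P}$ is fixed by $P+Q-PQ$ using $Pv=v$ and commutativity: $(P+Q-PQ)v=v+Qv-QPv=v$; the symmetric computation handles $\rng{Q}$. For the nullspace, if $(P+Q-PQ)v=0$, applying $P$ on the left and using $P^2=P$, $P(PQ)=PQ$ yields $Pv=0$; then the original equation collapses to $Qv=0$, placing $v$ in $\nullsp{P}\cap\nullsp{Q}$. The converse inclusion is immediate. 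I do not expect any serious obstacle here; the only mild subtlety is remembering to exploit $PQ=QP$ in both forms $P(QP)=PQ$ and $Q(PQ)=PQ$ when simplifying the square of $P+Q-PQ$ and when applying $P$ or $Q$ to the nullspace equation.
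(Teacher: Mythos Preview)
Your argument is correct. The paper, however, takes a much shorter route: since idempotent matrices are diagonalizable and commuting diagonalizable matrices are simultaneously diagonalizable, one may assume $P$ and $Q$ are diagonal $\{0,1\}$-matrices, after which all four identities become one-line observations about entries. Your approach avoids invoking the simultaneous-diagonalization theorem and instead proves each inclusion by direct computation; this is more elementary and self-contained, and it would work verbatim over an arbitrary ring, whereas the paper's argument implicitly needs the spectral theory of diagonalizable operators over a field. The trade-off is length: the paper disposes of the lemma in one sentence, while your proof requires tracking several small inclusions, each of which leans on $PQ=QP$ at the key moment.
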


\begin{proof}
    They are simultaneously diagonalisable, consider diagonal matrices. Then the claims are trivial.
\end{proof}

It is therefore reasonable to call $P\wedge Q$ the {\bf join} and $P\vee Q$ the {\bf meet} of $P$ and $Q$.

\begin{theorem}\label{thm:meet_and_join}
If projections $P,Q$ commute, then
\[P\vee Q=P+Q-PQ=S\pinv{\brac*{TS}}T,\]
where
\[S=\brac*{P+Q}\pinv{\brac*{P+Q}},\quad T=\pinv{\brac*{P+Q}}\brac*{P+Q}.\]
Moreovoer,
\[P\wedge Q=PQ=S\pinv{\brac*{TS}}T,\]
for
\[S=(PQ)\pinv{\brac*{PQ}},\quad T=\pinv{\brac*{PQ}}(PQ).\]
In particular, if $P,Q$ are orthogonal projections, then in both cases $S=T$ and
\[P\vee Q=\brac*{P+Q}\pinv{\brac*{P+Q}},\quad P\wedge Q=PQ=(PQ)\pinv{\brac*{PQ}}.\]
    
\end{theorem}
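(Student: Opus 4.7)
The plan is to apply Lemma~\ref{lem:proj_dim} to $\Pi := S\pinv{(TS)}T$ and match its range and null space with those of $P\vee Q = P+Q-PQ$ (respectively $P\wedge Q = PQ$), which the preceding lemma identifies as $\rng{P}+\rng{Q}$ and $\nullsp{P}\cap\nullsp{Q}$ (respectively $\rng{P}\cap\rng{Q}$ and $\nullsp{P}+\nullsp{Q}$). Since a projection is determined by its range and null space, this will force the equality.

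Set $M = P+Q$ in the join case and $M = PQ$ in the meet case. Then $S = M\pinv{M}$ and $T = \pinv{M}M$ are the orthogonal projections onto $\rng{M}$ and $\rng{M^*}$, hence Hermitian and idempotent. Applying Lemma~\ref{lem:proj_dim} with $A = S$, $B^* = T$, and simplifying via $S^2 = S$, $T^2 = T$, gives $\rng{\Pi} = \rng{ST}$ and $\nullsp{\Pi} = \nullsp{ST}$. The theorem thus reduces to showing $\rng{ST} = \rng{M}$ and $\nullsp{ST} = \nullsp{M}$.

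The decisive observation is that $M$ is diagonalizable in either case: $P$ and $Q$ commute and each is itself diagonalizable (with eigenvalues in $\{0,1\}$), so they share an eigenbasis in which $M$ is diagonal. Consequently $0$ is a semisimple eigenvalue of $M$, giving $\rng{M}\cap\nullsp{M} = 0$. The adjoints $P^*$ and $Q^*$ form another pair of commuting projections, so the same argument applied to $M^* \in \{P^*+Q^*,\ Q^*P^*\}$ yields $\rng{M^*}\cap\nullsp{M^*} = 0$. From $\rng{ST} \subset \rng{S} = \rng{M}$, the restriction $S|_{\rng{T}}\colon \rng{M^*} \to \rng{M}$ has kernel $\rng{M^*}\cap\nullsp{S} = \rng{M^*}\cap\nullsp{M^*} = 0$; since both spaces have dimension equal to the rank of $M$, this map is bijective, so $\rng{ST} = \rng{M}$. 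Dually, $STv = 0$ forces $Tv \in \rng{M^*}\cap\nullsp{M^*} = 0$, hence $\nullsp{ST} = \nullsp{T} = \nullsp{M}$. Therefore $\Pi$ is the projection onto $\rng{M}$ along $\nullsp{M}$, which coincides with $P\vee Q$ or $P\wedge Q$ by the preceding lemma.

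For the orthogonal case, $M$ is Hermitian, so $\rng{M^*} = \rng{M}$ and $S = T$; the formula collapses to $S\pinv{(S^2)}S = S\pinv{S}S = S$, using $\pinv{S} = S$ for the orthogonal projection $S$. The main obstacle I anticipate is the clean verification of $\rng{M}\cap\nullsp{M} = 0$ and $\rng{M^*}\cap\nullsp{M^*} = 0$ via simultaneous diagonalization; once these are in hand, everything else is bookkeeping with Lemma~\ref{lem:proj_dim}.
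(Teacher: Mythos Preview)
Your proof is correct and follows the same overall strategy as the paper: match the range and null space of $\Pi=S\pinv{(TS)}T$ with those of $P\vee Q$ (resp.\ $P\wedge Q$) given by the preceding lemma, using simultaneous diagonalization of the commuting projections. The paper, however, invokes Corollary~\ref{cor:prescribed_proj} (together with Lemma~\ref{lem:pinv_of_orth_projs}) rather than Lemma~\ref{lem:proj_dim}. Since $S$ and $T$ are orthogonal projections and $\rng{S}\oplus\nullsp{T}=\bK^m$ (this is exactly $\rng{M}\oplus\nullsp{M}=\bK^m$, the semisimplicity fact you establish), Corollary~\ref{cor:prescribed_proj} immediately yields $\rng{\Pi}=\rng{S}$ and $\nullsp{\Pi}=\nullsp{T}$, with no need to pass through $\rng{ST}$ and $\nullsp{ST}$. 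Your detour via Lemma~\ref{lem:proj_dim} is valid but costs you the extra argument that $S|_{\rng{T}}$ is a bijection onto $\rng{M}$; the paper's route sidesteps this. One small point worth making explicit in your write-up: the identification of $\rng{M}$ and $\nullsp{M}$ with $\rng{P\vee Q}$ and $\nullsp{P\vee Q}$ (e.g.\ $\rng{P+Q}=\rng{P}+\rng{Q}$) is itself a consequence of the simultaneous diagonalization and is not literally stated in the preceding lemma. The paper glosses this by appealing to positive semidefiniteness of $P,Q$, which is not quite right for non-orthogonal projections; your approach via simultaneous diagonalization is cleaner here.
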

\begin{proof}
   For the first claim, since $\rng{P+Q}=\rng{P}+\rng{Q}$, as $P$ and $Q$ are postive semidefinite matrices,
   \[\rng{P\vee Q}=\rng{S},\quad \nullsp{P\vee Q}=\nullsp{P}\cap \nullsp{Q}=\brac*{\rng{P^*}+\rng{Q^*}}^\perp=\rng{T}^\perp.\]
   The rest follows from Lemma~\ref{lem:pinv_of_orth_projs} and Corollary~\ref{cor:prescribed_proj}. For the second, claim note that, by Lemma~\ref{lem:prod_of_proj}
   \[\rng{P\wedge Q}=\rng{S},\quad \nullsp{P\wedge Q}=\nullsp{P}+\nullsp{Q}=\brac*{\rng{P^*}\cap\rng{Q^*}}^\perp=\rng{T}^\perp.\]
   If $P,Q$ are orthogonal then $\rng{P}=\rng{P^*},\ \rng{Q}=\rng{Q^*}$ therefore $\rng{S}=\rng{T}$ and $S=T$.
\end{proof}

\begin{lemma}
    If
    \[\proj{A}{B}+\proj{C}{D}=I,\]
    then
    \[AA^*BB^*CC^*DD^*=CC^*DD^*AA^*BB^*=0.\]
\end{lemma}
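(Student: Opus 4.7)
The plan is to unpack the assumption $\proj{A}{B} + \proj{C}{D} = I$ into a statement about the ranges and null spaces of the two summands, then deduce the vanishing products. Set $P = A\pinv{(B^*A)}B^*$ and $Q = C\pinv{(D^*C)}D^*$; both are projections by Lemma~\ref{lem:proj_dim}, and $P + Q = I$ forces $Q = I - P$, so $P$ and $Q$ are complementary. This elementary observation gives $\rng{Q} = \nullsp{P}$ and $\nullsp{Q} = \rng{P}$.

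Next I would feed in the range/null space formulas from Lemma~\ref{lem:proj_dim}: $\rng{P} = \rng{AA^*B}$, $\nullsp{P} = \nullsp{A^*BB^*}$, $\rng{Q} = \rng{CC^*D}$, and $\nullsp{Q} = \nullsp{C^*DD^*}$. Together with the complementarity above, this yields the two identities
\[
\rng{CC^*D} = \nullsp{A^*BB^*}, \qquad \rng{AA^*B} = \nullsp{C^*DD^*}.
\]

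The products in the conclusion are then essentially automatic. For the first: by Lemma~\ref{lem:cancel_in_rowsp_and_colsp}, $\rng{CC^*DD^*} = \rng{CC^*D}$, which by the display above lies in $\nullsp{A^*BB^*}$, and hence \emph{a fortiori} in $\nullsp{AA^*BB^*}$. Thus $AA^*BB^* \cdot CC^*DD^* = 0$. For the second, symmetrically, $\rng{AA^*BB^*} = \rng{AA^*B} = \nullsp{C^*DD^*} \subset \nullsp{CC^*DD^*}$, giving $CC^*DD^* \cdot AA^*BB^* = 0$.

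There is no real obstacle: the only subtle point is that one must read Lemma~\ref{lem:proj_dim} correctly to identify the range and null space of $\proj{A}{B}$ in terms of $AA^*B$ and $A^*BB^*$ (not $A$ and $B^*$ themselves, since the assumptions of Lemma~\ref{lem:proj} are not imposed here). Once that is done, everything reduces to the trivial set-theoretic inclusions $\nullsp{M^*N} \subset \nullsp{MM^*N}$ and $\rng{MN} = \rng{MNN^*}$ already present in the paper's preliminaries.
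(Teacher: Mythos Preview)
Your proposal is correct and follows essentially the same route as the paper: from $P+Q=I$ one reads off $\rng{Q}=\nullsp{P}$ and $\rng{P}=\nullsp{Q}$, then invokes the range/null space description in Lemma~\ref{lem:proj_dim} to obtain the vanishing of both products. Your write-up simply fills in the details (the use of Lemma~\ref{lem:cancel_in_rowsp_and_colsp} and the trivial inclusion $\nullsp{A^*BB^*}\subset\nullsp{AA^*BB^*}$) that the paper's two-sentence proof leaves implicit.
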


\begin{proof}
    The image of one projection is equal to the kernel of the other. The claim follows from the 
    description of the image and the kernel of a projection in Lemma~\ref{lem:proj_dim}.
\end{proof}

The converse follows when the ranks of $B^*A$ and $C^*D$ are complementary.

\section{Examples}
The following code illustrates the usage of the generalized Wedderburn rank reduction formula.

\begin{lstlisting}
m = 200; n = 30;
p = 10; q = 7;
r = 20; % rank of A

A = rand(m,r)*rand(r,n); % random m-by-n matrix of rank r
X = sprand(n,p,0.01); Y = sprand(m,q,0.01);

M = Y'*A*X; 
P = (A*X)*pinv(M)*Y'; Q = X*pinv(M)*(Y'*A); 
B = (A*X)*pinv(Y'*A*X)*(Y'*A);

rank(B) - (rank(A) - rank(M))

Y=Y*rand(q,2*q); % different Y with the same image
BB = (A*X)*pinv(Y'*A*X)*(Y'*A);
norm(B-BB) % different in general
\end{lstlisting}

The code below shows the correctness of Corollary~\ref{cor:red_by_SVD_part}.

\begin{lstlisting}
% see for yourself
m = 15; n = 13; r = 7;

A = rand(m,r) * rand(r,n); % random m-by-n matrix of rank r
[U, S, V] = svd(A);
k = randi([1 r-1],1,1); P = randperm(r); % the rank of A will be reduced by k
I = sort(P(1:k)); % choose k random columns
UI = U(:,I); VI = V(:,I);
p = randi([1 k],1,1); q = randi([1 k],1,1);
M = rand(k,k+p); N = rand(k,k+q);  % two full row rank matrices of k rows and random number of columns
X = VI * M;  Y = UI * N;  % matrices spanned by columns of V and U

k
svd((A*X)*pinv(Y'*A*X)*(Y'*A))
S_list = diag(S); % vector of singular values

% the generalized Wedderburn reduction of A by X and Y
% is the same as the k-rank reduction by SVD
norm((A*X)*pinv(Y'*A*X)*(Y'*A) - UI*S(I,I)*VI')

\end{lstlisting}

\bibliographystyle{unsrt}  
\bibliography{references}  

\end{document}